\theoremstyle{plain}
\newtheorem{theorem}{Theorem}
\newtheorem{lemma}[theorem]{Lemma}
\newtheorem{proposition}[theorem]{Proposition}
\theoremstyle{definition}
\newtheorem{definition}[theorem]{Definition}
\newtheorem{question}[theorem]{Question}
\theoremstyle{remark}
\newtheorem{remark}[theorem]{Remark}
\title{Partial-twuality polynomials of delta-matroids}
\author{Qi Yan\\
\small School of Mathematics\\[-0.8ex]
\small China University of Mining and Technology\\[-0.8ex]
\small P. R. China\\
Xian'an Jin\footnote{Corresponding author.}\\
\small School of Mathematical Sciences\\[-0.8ex]
\small Xiamen University\\[-0.8ex]
\small P. R. China\\
\small{\tt Email:qiyan@cumt.edu.cn; xajin@xmu.edu.cn}
}
\date{}
\begin{document}
\begin{abstract}
Gross, Mansour and Tucker introduced the partial-twuality polynomial of a ribbon graph.
Chumutov and Vignes-Tourneret posed a problem: it would be interesting to know whether the partial duality polynomial and the related conjectures would make sense for general delta-matroids.
In this paper we consider analogues of partial-twuality polynomials for delta-matroids.
Various possible properties of partial-twuality polynomials of set systems are studied.
We discuss the numerical implications of partial-twualities on a single element and prove that the intersection graphs can determine the partial-twuality polynomials of bouquets and normal binary delta-matroids, respectively. Finally, we give a characterization of vf-safe delta-matroids whose partial-twuality  polynomials have only one term.
\end{abstract}
\begin{keyword}
Set system\sep delta-matroid\sep ribbon graph \sep twuality\sep  polynomial
\vskip0.2cm
\MSC [2020] 05B35\sep 05C10\sep 05C31
\end{keyword}
\maketitle

\section{Introduction}
In \cite{WIL}, Wilson found that the two long-standing duality operators $\delta$
(geometric duality) and $\tau$ (Petrie duality) generate a group of six ribbon graph operators, that is,
every other composition of $\delta$ and $\tau$ is equivalent to one of the five operators $\delta$, $\tau$, $\delta\tau$, $\tau\delta$, $\delta\tau\delta$, or to the identity operator. Abrams and Ellis-Monaghan \cite{ABEL} called the five operators twualities. The partial (geometric) dual with respect to a subset of edges of a ribbon graph was introduced by Chmutov \cite{CG} in order to unify various connections between the Jones-Kauffman and Bollob\'{a}s-Riordan polynomials. Ellis-Monaghan and Moffatt \cite{EM} generalized this partial-duality construction to the other four operators, which they called partial-twualities.

Gross, Mansour and Tucker \cite{GMT, GMT2} introduced the partial-twuality polynomial for $\delta, \tau, \delta\tau, \tau\delta$, and $\delta\tau\delta$. Various basic properties of partial-twuality polynomials were studied, including
interpolation and log-concavity.  Recently, Chumutov and Vignes-Tourneret \cite{SCFV} posed the following question:

\begin{question}\cite{SCFV}
Ribbon graphs may be considered from the point of view of delta-matroid. In this way the concepts of partial (geometric) duality and genus can be interpreted in terms of delta-matroids \cite{CISR, CMNR}. It would be interesting to know whether the partial-$\delta$ polynomial and the related conjectures would make sense for general delta-matroids.
\end{question}

In \cite{QYXJ3}, we showed that the partial-$\delta$ polynomials have delta-matroid analogues. We introduced the twist polynomials of delta-matroids and discussed their basic properties for delta-matroids. Chun et al. \cite{CISR} showed that the loop complemenation is the delta-matroid analogue of partial Petriality. In this paper we consider analogues of other partial-twuality polynomials for delta-matroids.

This paper is organised as follows. In Section 2 we recall the definition of partial-twuality polynomials of ribbon graphs. Analogously, we introduce the partial-twuality polynomials of set systems. In Section 3, various possible properties of partial-twuality polynomials of set systems are studied. In Section 4 we discuss the numerical implications of partial-twualities on a single element and the interpolation. In Section 5, we prove that the intersection graphs can determine the partial-twuality polynomials of bouquets and normal binary delta-matroids, respectively. Here we provide an answer to the question \cite{QYXJ2}: can one derive something from bouquets that could determine the partial-twuality polynomial completely. In Section 6 we give a characterization of vf-safe delta-matroids whose partial-twuality  polynomials have only one term.

\section{Preliminaries}

\subsection{Set systems and widths}

A \emph{set system} is a pair $D=(E, \mathcal{F})$ of a finite set $E$ together with a collection $\mathcal{F}$ of subsets of $E$. The set $E$ is called the \emph{ground set} and the elements of $\mathcal{F}$ are the \emph{feasible sets}. We often use $\mathcal{F}(D)$ to denote the set of feasible sets of $D$.
$D$ is \emph{proper} if $\mathcal{F}\neq \emptyset$, and is \emph{normal} (respectively, \emph{dual normal}) if the empty set (respectively, the ground set) is feasible.
The \emph{direct sum} of two set systems $D=(E, \mathcal{F})$ and $\widetilde{D}=(\widetilde{E}, \widetilde{\mathcal{F}})$ with disjoint ground sets $E$ and $\widetilde{E}$, written $D\oplus \widetilde{D}$, is defined to be $$D\oplus \widetilde{D}:=(E\cup \widetilde{E}, \{F\cup \widetilde{F}: F\in \mathcal{F}~\text{and}~\widetilde{F}\in \widetilde{\mathcal{F}}\}).$$

As introduced by Bouchet in \cite{AB1}, a \emph{delta-matroid} is a proper set system $D=(E, \mathcal{F})$ such that if $X, Y \in \mathcal{F}$ and $u\in X\Delta Y$, then there is $v\in X\Delta Y$ (possibly  $v=u$ ) such that $X\Delta \{u, v\}\in \mathcal{F}$.  Here $$X\Delta Y:=(X\cup Y)-(X\cap Y)$$  is the usual symmetric difference of sets. Note that the maximum gap in the collection of sizes of feasible sets of a delta-matroid is two \cite{Moff}.


For a set system $D=(E, \mathcal{F})$, let $\mathcal{F}_{max}(D)$ and $\mathcal{F}_{min}(D)$ be the collections of maximum and minimum cardinality feasible sets of $D$, respectively. Let $D_{max}:=(E, \mathcal{F}_{max}(D))$ and $D_{min}:=(E, \mathcal{F}_{min}(D))$. Let $r(D_{max})$  and $r(D_{min})$ denote the sizes of largest and  smallest feasible sets of $D$, respectively.
The  \emph{width} of $D$, denote by $w(D)$, is defined by $$w(D):=r(D_{max})-r(D_{min}).$$
For all non-negative integers $i\leq w(D)$, let $$\mathcal{F}_{max-i}(D)=\{F\in\mathcal{F}: |F|=r(D_{max})-i\}$$ and $$\mathcal{F}_{min+i}(D)=\{F\in\mathcal{F}: |F|=r(D_{min})+i\}.$$



\subsection{Partial-twualities of set systems}

We will consider the operations of twisting and loop complementation on set systems.
Twisting was introduced by Bouchet in \cite{AB1}, and loop complementation by Brijder and
Hoogeboom in \cite{BRHH}.

Let $D=(E, \mathcal{F})$ be a set system. For $A\subseteq E$, the \emph{twist} of $D$ with respect to $A$, denoted by $D^{*|A}$, is given by $$(E, \{A\Delta X: X\in \mathcal{F}\}).$$ The \emph{$\ast$-dual} of $D$, written $D^{*}$, is equal to $D^{*|E}$. Note that $\ast$-duality preserves width. Throughout the paper, we will often omit the set brackets in the case of a single element set. For example, we write $D^{*|e}$ instead of $D^{*|\{e\}}$.

Let $D=(E, \mathcal{F})$ be a set system and $e\in E$. Then $D^{\times|e}$ is defined to be the set system $(E, \mathcal{F}')$, where $$\mathcal{F}'=\mathcal{F}\Delta \{F\cup e: F\in \mathcal{F}~\text{and}~e\notin F\}.$$
If $e_{1}, e_{2}\in E$ then $$(D^{\times|e_{1}})^{\times|e_{2}}=(D^{\times|e_{2}})^{\times|e_{1}}.$$ This means that if $A=\{e_{1}, \cdots, e_{m}\}\subseteq E$ we can unambiguously define the \emph{loop complementation} \cite{BRHH} of $D$ on $A$, by $$D^{\times|A}:=(\cdots(D^{\times|e_{1}})^{\times|e_{2}}\cdots)^{\times|e_{m}}.$$

It is straightforward to show that the twist of a delta-matroid is a delta-matroid \cite{AB1},
but the set of delta-matroids is not closed under loop complementation (see, for example, \cite{CISR}).  Thus, we often restrict our attention to a class of delta-matroids that is closed under loop complementation. A delta-matroid $D=(E, \mathcal{F})$ is said to be \emph{vf-safe} \cite{CISR} if the application of every sequence of twists and loop complementations results in a delta-matroid.

In \cite{BRHH} it was shown that twists and loop complementations give rise to an action of the symmetric group $S_{3}$, with the presentation $$S_{3}\cong \mathcal{B}:=<\ast, \times~|~\ast^{2}, \times^{2}, (\ast\times)^{3}>,$$ on set systems. If $D=(E, \mathcal{F})$ is a set system, $e\in E$ and $a=a_{1}a_{2}\cdots a_{n}$ is a word in the alphabet $\{\ast, \times\}$, then
$$D^{a|e}:=(\cdots(D^{a_{1}|e})^{a_{2}|e}\cdots)^{a_{n}|e}.$$
Note that the operators $\ast$ and $\times$ on different elements commute \cite{BRHH}.
If $A=\{e_{1}, \cdots, e_{m}\}\subseteq E$, we can unambiguously define
$$D^{a|A}:=(\cdots(D^{a|e_{1}})^{a|e_{2}}\cdots)^{a|e_{m}}.$$
Let $D_{1}=(E, \mathcal{F})$ and $D_{2}$ be set systems. For $\bullet\in \{\ast, \times, \ast\times, \times\ast, \ast\times\ast\}$, we say that $D_{2}$ is a \emph{partial-$\bullet$ dual} of $D_{1}$ if there exists $A\subseteq E$ such that $D_{2}={D_{1}}^{\bullet|A}$.


\subsection{Partial-twualities of ribbon graphs}
Ribbon graphs are well-known to be equivalent to cellularly embedded graphs. The reader is referred to \cite{EM1,EM} for further details about ribbon graphs. A \emph{quasi-tree} is a ribbon graph with one boundary component.
Let $G=(V, E)$ be a ribbon graph and let $$\mathcal{F}:=\{F\subseteq E(G): \text{$F$ is the edge set of a spanning quasi-tree of $G$}\}.$$ We call $D(G)=:(E, \mathcal{F})$ the \emph{delta-matroid} \cite{CMNR} of $G$.
We say a delta-matroid is \emph{ribbon-graphic} if it is equal to the delta-matroid of some ribbon graph. Note that ribbon-graphic delta-matroids are vf-safe \cite{CISR}.

For a ribbon graph $G$ and a subset $A$ of its edge-ribbons $E(G)$,  the \emph{partial dual} $G^{\delta|A}$  \cite{CG} of $G$ with respect to $A$ is a ribbon graph obtained from $G$ by gluing a disc to $G$ along each boundary component of the spanning ribbon subgraph $(V (G), A)$ (such discs will be the vertex-discs of $G^{\delta|A}$), removing the interiors of all vertex-discs of $G$ and keeping the edge-ribbons unchanged.

Let $G$ be a ribbon graph and $A\subseteq E(G)$. Then the \emph{partial Petrial} $G^{\tau|A}$ \cite{EM1} of $G$ with respect to $A$ is the ribbon graph obtained from $G$ by adding a half-twist to each of the edges in $A$.

In \cite{EM1} it was shown that the partial dual, $\delta$, and the partial Petrial, $\tau$, give rise to an action of the symmetric group $S_{3}$, with the presentation $$S_{3}\cong \mathcal{R}:=<\delta, \tau~|~\delta^{2}, \tau^{2}, (\delta\tau)^{3}>,$$ on ribbon graphs. If $G$ is a ribbon graph, $e\in E(G)$ and $a=a_{1}a_{2}\cdots a_{n}$ is a word in the alphabet $\{\delta, \tau\}$, then
$$G^{a|e}:=(\cdots(G^{a_{1}|e})^{a_{2}|e}\cdots)^{a_{n}|e}.$$
Observe that the partial dual and the partial Petrial commute when applied to different edges \cite{EM1}. If $A=\{e_{1}, \cdots, e_{m}\}\subseteq E$, we define
$$G^{a|A}:=(\cdots(G^{a|e_{1}})^{a|e_{2}}\cdots)^{a|e_{m}}.$$

Let $G_{1}$ and $G_{2}$ be ribbon graphs. For $\bullet \in \{\delta, \tau, \delta\tau, \tau\delta, \delta\tau\delta\}$, we say that $G_{2}$ is a partial-$\bullet$ dual \cite{EM1} of $G_{1}$ if there exists $A\subseteq E(G_{1})$ such that $G_{2}={G_{1}}^{\bullet|A}$.


\subsection{Partial-twuality polynomials of ribbon graphs and set systems}

Gross, Mansour and Tucker \cite{GMT2} introduced the concept of partial-twuality polynomials of ribbon graphs  as follows.

\begin{definition}[\cite{GMT2}]
For $\bullet \in \mathcal{R}$, we define the partial-$\bullet$ polynomial for any ribbon graph $G$ to be the generating function
$$^{\partial}\varepsilon_{G}^{\bullet}(z):=\sum_{A\subseteq E(G)}z^{\varepsilon(G^{\bullet\mid A})}$$
that enumerates all partial-$\bullet$ duals of $G$ by Euler genus.
\end{definition}

Analogously, we define the partial-twuality polynomials of set systems as follows.


\begin{definition}
For $\bullet \in \mathcal{B}$, the \emph{partial-$\bullet$ polynomial} of any set system $D=(E, \mathcal{F})$ is defined to be the generating function
$$^{\partial}w_{D}^{\bullet}(z):=\sum_{A\subseteq E}z^{w(D^{\bullet\mid A})}$$
that enumerates all partial-$\bullet$ duals of $D$ by width.
\end{definition}

\subsection{Binary and intersection graphs}

For a finite set $E$, let $C$ be a symmetric $|E|$ by $|E|$ matrix over $GF(2)$, with rows and columns indexed, in the same order, by the elements of $E$. Let $C[A]$ be the principal submatrix of $C$ induced by the set $A\subseteq E$. We define the set system $D(C)=(E, \mathcal{F})$ with $$\mathcal{F}:=\{A\subseteq E: C[A] \mbox{ is non-singular}\}.$$  By convention $C[\emptyset]$ is non-singular. Then $D(C)$ is a delta-matroid \cite{AB4}. A delta-matroid is said to be \emph{binary} if it has a twist that is isomorphic to $D(C)$ for some symmetric matrix $C$ over $GF(2)$.

Let $D=(E, \mathcal{F})$ be a normal binary delta-matroid. Then there exists a  unique symmetric $|E|$ by $|E|$ matrix $C$ over $GF(2)$ such that $D=D(C)$ \cite{Moff, QYXJ3}. The \emph{intersection graph} $G_{D}$ of $D$ is the graph with vertex set $E$ and in which two vertices $u$ and $v$ of $G_{D}$ are adjacent if and only if $C_{u, v}=1$.   
A \emph{bouquet} is a ribbon graph with only one vertex.  If $B$ is a bouquet, then $D(B)$ is a normal binary delta-matroid \cite{CMNR}. The \emph{intersection graph} $I(B)$ of a bouquet $B$ is the graph $G_{D(B)}$.

Conversely, recall that a \emph{looped simple graph} \cite{Moff} is a graph obtained from a simple graph by adding (exactly) one loop to some of its vertices. The adjacency matrix $A(G)$ of a looped simple graph $G$ is the matrix over $GF(2)$ whose rows and columns correspond to the vertices of $G$; and where, $A(G)_{u, v}=1$ if and only if $u$ and $v$ are adjacent in $G$ and $A(G)_{u, u}=1$ if and only if there is a loop at $u$.
Let $D$ be a normal binary delta-matroid. It obvious that $D=D(A(G_{D}))$.

\subsection{Primal and dual types}

Let $D=(E, \mathcal{F})$ be a proper set system. An element $e\in E$ contained in no feasible set of $D$ is said to be a \emph{loop}.

\begin{definition}[\cite{CMNR}]
Let $D=(E, \mathcal{F})$ be a set system and $e\in E$. Then
\begin{description}
  \item[(1)] $e$ is a \emph{ribbon loop} if $e$ is a loop in $D_{min}$;
  \item[(2)] A ribbon loop $e$ is \emph{non-orientable} if $e$ is a ribbon loop in $D^{*|e}$ and is \emph{orientable} otherwise.
\end{description}
\end{definition}
Let $D=(E, \mathcal{F})$ be a set system and $e\in E$. The \emph{primal type} of $e$ is $p, u,$ or $t$ in $D$, if $e$ is a non-ribbon loop, an orientable loop, or a non-orientable loop,  respectively, in $D$. The primal type of $e$ in $D^{\ast}$ is called the \emph{dual type} of $e$ in $D$. In combination, the primal and dual types of $e$ in $D$ are called the \emph{type} of $e$ in $D$, which is denoted by a juxtaposed pair of letters representing the primal and dual types of $e$ in $D$.  For example, the type $pu$ means that the primal and dual types of $e$ are $p$ and $u$, respectively, in $D$. We observe that

\begin{description}
  \item[(1)] The primal type of $e$ is $p$ in $D$ if and only if there exists $A\in \mathcal{F}_{min}(D)$ such that $e\in A;$
  \item[(2)] The dual type of $e$ is $p$ in $D$ if and only if there exists $A\in \mathcal{F}_{max}(D)$ such that $e\notin A;$
  \item[(3)] The primal type of $e$ is $u$ in $D$ if and only if for any $A\in \mathcal{F}_{min}(D)\cup \mathcal{F}_{min+1}(D)$, $e\notin A;$
  \item[(4)] The dual type of $e$ is $u$ in $D$ if and only if for any $A\in \mathcal{F}_{max}(D)\cup \mathcal{F}_{max-1}(D)$, $e\in A;$
  \item[(5)] The primal type of $e$ is $t$ in $D$ if and only if for any $A\in \mathcal{F}_{min}(D)$, $e\notin A$, and there exists $B\in \mathcal{F}_{min+1}(D)$ such that $e\in B;$
  \item[(6)] The dual type of $e$ is $t$ in $D$ if and only if for any $A\in \mathcal{F}_{max}(D)$, $e\in A$, and there exists $B\in \mathcal{F}_{max-1}(D)$ such that $e\notin B.$
\end{description}

\section{ Some properties of partial-twuality polynomials}
Various possible properties of partial-twuality polynomials of ribbon graphs were studied by Gross, Mansour and Tucker in \cite{GMT, GMT2}. In this section we discuss the analogous results on set systems or delta-matroids.









\begin{proposition}
Let $D=(E, \mathcal{F})$ and $\widetilde{D}=(\widetilde{E}, \widetilde{\mathcal{F}})$ be set systems. Then for any $\bullet\in \mathcal{B}$,
\begin{description}
  \item[(1)] $^{\partial}w_{D}^{\bullet}(1)=2^{\mid E\mid};$
  \item[(2)] $^{\partial}w_{D}^{\bullet}(z)$ has degree at most $|E|;$
  \item[(3)]$^{\partial}w_{D\oplus \widetilde{D}}^{\bullet}(z)={^{\partial}w_{D}^{\bullet}(z)} {^{\partial}w_{\widetilde{D}}^{\bullet}(z)}.$
\end{description}
\end{proposition}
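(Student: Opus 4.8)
The plan is to treat (1) and (2) directly from the definitions, and to reduce (3) to a compatibility between the $S_{3}$-action and direct sums together with the additivity of width. Throughout one may assume the set systems are proper, since otherwise every $D^{\bullet\mid A}$ has no feasible sets and all three statements are vacuous.

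For (1), I would substitute $z=1$ into $^{\partial}w_{D}^{\bullet}(z)=\sum_{A\subseteq E}z^{w(D^{\bullet\mid A})}$: each summand becomes $1$, and since the index set is the power set of $E$, the sum equals $2^{|E|}$. For (2), I would note that each operator $\ast$ and $\times$ (applied to any single element) leaves the ground set $E$ unchanged, so $D^{\bullet\mid A}$ is again a set system on $E$ whose feasible sets are subsets of $E$; hence $0\le r((D^{\bullet\mid A})_{min})\le r((D^{\bullet\mid A})_{max})\le |E|$, and therefore $w(D^{\bullet\mid A})\le |E|$. So no exponent in the sum exceeds $|E|$, giving $\deg\, {^{\partial}w_{D}^{\bullet}}\le |E|$.

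For (3), write $A_{1}:=A\cap E$ and $A_{2}:=A\cap\widetilde E$ for $A\subseteq E\cup\widetilde E$. The heart of the argument is the identity
\[(D\oplus\widetilde D)^{\bullet\mid A}=D^{\bullet\mid A_{1}}\oplus\widetilde D^{\bullet\mid A_{2}}.\]
I would establish this first for a single element $e$ and a single letter $a\in\{\ast,\times\}$. If $e\in E$ and $a=\ast$, it is immediate from $e\,\Delta\,(F\cup\widetilde F)=(e\,\Delta\,F)\cup\widetilde F$, which holds because $E\cap\widetilde E=\emptyset$. If $e\in E$ and $a=\times$, I would use the auxiliary fact that, for a fixed family $\widetilde{\mathcal F}$ on $\widetilde E$, the operation sending a family $\mathcal G$ on $E$ to $\{G\cup\widetilde F:G\in\mathcal G,\ \widetilde F\in\widetilde{\mathcal F}\}$ distributes over symmetric difference of families on $E$: indeed $G\mapsto G\cup\widetilde F$ is injective, and for distinct $\widetilde F$ the images have distinct $\widetilde E$-parts, so the pieces never collide. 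Applying this to the defining symmetric difference $\mathcal F'=\mathcal F\,\Delta\,\{F\cup e:F\in\mathcal F,\ e\notin F\}$ of $\times\mid e$ gives the claim; the cases $e\in\widetilde E$ are symmetric, acting only on $\widetilde D$. Iterating this single-element/single-letter identity over the letters of $\bullet$ and over the elements of $A$, using that operations on different elements commute, yields the displayed identity in general.

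With the identity in hand I would record the additivity of width: since $\mathcal F_{max}(D_{1}\oplus D_{2})=\{F_{1}\cup F_{2}:F_{i}\in\mathcal F_{max}(D_{i})\}$ and likewise for minimum-cardinality feasible sets, the extreme ranks add, so $w(D_{1}\oplus D_{2})=w(D_{1})+w(D_{2})$. Combining, $w((D\oplus\widetilde D)^{\bullet\mid A})=w(D^{\bullet\mid A_{1}})+w(\widetilde D^{\bullet\mid A_{2}})$, and the sum over $A\subseteq E\cup\widetilde E$ factors as
\[{^{\partial}w_{D\oplus\widetilde D}^{\bullet}}(z)=\sum_{A_{1}\subseteq E}\sum_{A_{2}\subseteq\widetilde E}z^{w(D^{\bullet\mid A_{1}})}\,z^{w(\widetilde D^{\bullet\mid A_{2}})}={^{\partial}w_{D}^{\bullet}}(z)\cdot{^{\partial}w_{\widetilde D}^{\bullet}}(z).\]
The only genuinely delicate point is the loop-complementation case of the displayed identity: because $\times\mid e$ is defined through a symmetric difference of set families rather than a transformation applied to each feasible set individually, one must check that this symmetric difference commutes with forming the direct sum, which is exactly the distributivity observation above. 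Once that is settled, the rest is routine bookkeeping.
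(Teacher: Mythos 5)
Your proposal is correct and follows essentially the same route as the paper: evaluate at $z=1$ for (1), bound the width by $|E|$ for (2), and for (3) use the identity $(D\oplus\widetilde D)^{\bullet\mid A}=D^{\bullet\mid A\cap E}\oplus\widetilde D^{\bullet\mid A\cap\widetilde E}$ together with additivity of width over direct sums. The only difference is that the paper asserts this compatibility identity without proof, whereas you verify it (including the loop-complementation case via distributivity over symmetric difference), which is a welcome but not essentially different elaboration.
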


\begin{proof}
For (1), the value $^{\partial}w_{D}^{\bullet}(1)$ counts the total number of partial-$\bullet$ duals of $D$, which is $2^{|E|}$. For any subset $A\subseteq E$, if $B\in\mathcal{F}(D^{\bullet|A})$, then $\emptyset\subseteq B\subseteq E$. We have $r({D^{\bullet|A}}_{min})\geq0$ and $r({D^{\bullet|A}}_{max})\leq |E|$. Thus $0\leq w(D^{\bullet|A})\leq |E|$ and (2)  then follows. For any subset $C\subseteq E\cup \widetilde{E}$, we have
$$(D\oplus \widetilde{D})^{\bullet|C}=D^{\bullet|(C\cap E)}\oplus \widetilde{D}^{\bullet|(C\cap \widetilde{E})}.$$
Then $$^{\partial}w_{D\oplus \widetilde{D}}^{\bullet}(z)={^{\partial}w_{D}^{\bullet}(z)} {^{\partial}w_{\widetilde{D}}^{\bullet}(z)},$$
by the additivity of  width over the direct sum, from which (3) follows.
\end{proof}

\begin{proposition}\label{pro 1}
Let $D=(E, \mathcal{F})$ be a set system and $A\subseteq E$. Then $$^{\partial}w_{D}^{\bullet}(z)={^{\partial}w^{\bullet}_{D^{\bullet|A}}(z)}$$ for $\bullet\in \{\ast, \times, \ast\times\ast\}.$
\end{proposition}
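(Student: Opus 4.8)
The plan is to exploit the special feature of the three operators $\ast,\times,\ast\times\ast$: they are precisely the involutions among the five nontrivial elements of $\mathcal{B}\cong S_{3}$. Indeed $\ast^{2}=\times^{2}=\mathrm{id}$ by the presentation of $\mathcal{B}$, and $(\ast\times\ast)^{2}=\ast\times\ast\ast\times\ast=\ast\times\times\ast=\ast\ast=\mathrm{id}$ as well; by contrast the $3$-cycles $\ast\times$ and $\times\ast$ have order $3$, which is exactly why the statement is restricted to $\bullet\in\{\ast,\times,\ast\times\ast\}$.

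First I would record that, since $\bullet^{2}=\mathrm{id}$ in $\mathcal{B}$, for every single element $e\in E$ the operation $D\mapsto D^{\bullet|e}$ is an involution, i.e.\ $(D^{\bullet|e})^{\bullet|e}=D$. Next, using that $\ast$ and $\times$ acting on distinct elements commute — so the whole block $\bullet|e$ commutes with the block $\bullet|f$ whenever $e\neq f$ — I would rearrange the composition defining $(D^{\bullet|A})^{\bullet|B}$: the blocks indexed by $A\cap B$ occur twice and cancel in pairs, while the blocks indexed by $A\Delta B$ occur exactly once. This yields the key identity
$$(D^{\bullet|A})^{\bullet|B}=D^{\bullet|(A\Delta B)}\qquad\text{for all }A,B\subseteq E.$$

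Granting this identity, the proof finishes in one line. The map $B\mapsto A\Delta B$ is a bijection of $2^{E}$ onto itself (it is its own inverse), so reindexing the defining sum gives
$$ {}^{\partial}w^{\bullet}_{D^{\bullet|A}}(z)=\sum_{B\subseteq E}z^{w((D^{\bullet|A})^{\bullet|B})}=\sum_{B\subseteq E}z^{w(D^{\bullet|(A\Delta B)})}=\sum_{B'\subseteq E}z^{w(D^{\bullet|B'})}={}^{\partial}w_{D}^{\bullet}(z). $$
The only point requiring care is the derivation of the displayed identity $(D^{\bullet|A})^{\bullet|B}=D^{\bullet|(A\Delta B)}$ from the commutation rule and the relation $\bullet^{2}=\mathrm{id}$; beyond that, the argument touches neither feasible sets nor widths, since $w$ appears only as an exponent that is carried along unchanged. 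One should also note that this reindexing genuinely uses that $\bullet$ is an involution: for $\bullet\in\{\ast\times,\times\ast\}$ the set system $(D^{\bullet|A})^{\bullet|B}$ need not even be a partial-$\bullet$ dual of $D$, so no analogous identity is available.
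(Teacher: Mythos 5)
Your proof is correct and takes essentially the same route as the paper: the paper's one-line argument that the set of partial-$\bullet$ duals of $D$ coincides with that of $D^{\bullet|A}$ is precisely your identity $(D^{\bullet|A})^{\bullet|B}=D^{\bullet|(A\Delta B)}$ combined with the reindexing bijection $B\mapsto A\Delta B$. You merely make explicit the involution and commutation facts from \cite{BRHH} that the paper leaves implicit.
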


\begin{proof}
This is because the set of all loop complementations of $D$ is the same as that of $D^{\times|A}$. The same reasoning applies to the operators $\ast$ and $\ast\times\ast$.
\end{proof}

\begin{remark}
Proposition \ref{pro 1} is not true for the operators $\ast\times$ and $\times\ast$. For example, let $D=(E, \mathcal{F})$ with $E=\{1\}$ and $\mathcal{F}=\{\emptyset, \{1\}\}$. Then $D^{\ast\times|1}=(\{1\}, \{\emptyset\})$ and $D^{\times\ast|1}=(\{1\}, \{\{1\}\})$. We have $$^{\partial}w_{D}^{\ast\times}(z)={^{\partial}w_{D}^{\times\ast}(z)}=1+z$$ and $$^{\partial}w^{\ast\times}_{D^{\ast\times|1}}(z)={^{\partial}w^{\times\ast}_{D^{\times\ast|1}}(z)}=2.$$ Obviously, $^{\partial}w_{D}^{\ast\times}(z)\neq{^{\partial}w^{\ast\times}_{D^{\ast\times|1}}(z)}$ and
$^{\partial}w_{D}^{\times\ast}(z)\neq{^{\partial}w^{\times\ast}_{D^{\times\ast|1}}(z)}.$
\end{remark}




\begin{lemma}[\cite{BRHH}\label{lem 2}]
Let $D=(E, \mathcal{F})$ be a set system and $A\subseteq E$. Then $$\mathcal{F}_{min}(D)=\mathcal{F}_{min}(D^{\times|A})$$ and $$\mathcal{F}_{max}(D)=\mathcal{F}_{max}(D^{\ast\times\ast|A})=\mathcal{F}_{max}(D^{\times\ast\times|A}).$$
\end{lemma}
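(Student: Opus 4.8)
The plan is to reduce both assertions to the case of a single element $e\in E$ and then induct on $|A|$, using that $\ast$ and $\times$ applied to distinct elements commute. Fix $e$ and split $\mathcal{F}=\mathcal{F}_{0}\cup\mathcal{F}_{1}$ with $\mathcal{F}_{0}=\{F\in\mathcal{F}: e\notin F\}$ and $\mathcal{F}_{1}=\{F\in\mathcal{F}: e\in F\}$, and write $\mathcal{F}_{0}+e:=\{F\cup e: F\in\mathcal{F}_{0}\}$ and $\mathcal{F}_{1}-e:=\{F\setminus e: F\in\mathcal{F}_{1}\}$. Unwinding the definition of $D^{\times|e}$, one sees that the feasible sets of $D^{\times|e}$ avoiding $e$ are exactly $\mathcal{F}_{0}$, while those containing $e$ form $\mathcal{F}_{1}\,\Delta\,(\mathcal{F}_{0}+e)$. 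Recording this explicit description is the first step.

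For the claim about $\mathcal{F}_{min}$, put $m=r(D_{min})$. The key observation is a cardinality bound: every member of $\mathcal{F}_{0}+e$ has size at least $m+1$, since it arises from a feasible set by adjoining an element. Hence no cancellation can occur at cardinality level $m$ inside $\mathcal{F}_{1}\,\Delta\,(\mathcal{F}_{0}+e)$, so the size-$m$ feasible sets of $D^{\times|e}$ that contain $e$ are precisely the size-$m$ members of $\mathcal{F}_{1}$; since the $e$-free part is unchanged, $D^{\times|e}$ has no feasible set of size below $m$ and its size-$m$ feasible sets coincide with those of $D$. This gives $\mathcal{F}_{min}(D^{\times|e})=\mathcal{F}_{min}(D)$, and applying it to $D^{\times|e_{1}}$ together with the smaller set $A\setminus e_{1}$ yields $\mathcal{F}_{min}(D^{\times|A})=\mathcal{F}_{min}(D)$ by induction on $|A|$.

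For $\mathcal{F}_{max}$ I would derive the statement from the one just proved, using the full twist $\phi=\ast|E$. Because $E\Delta X=E\setminus X$ for $X\subseteq E$, the map $F\mapsto E\setminus F$ is a size-reversing bijection from $\mathcal{F}$ onto $\mathcal{F}(D^{\ast})$, so $\mathcal{F}_{max}(D^{\ast})=\{E\setminus F: F\in\mathcal{F}_{min}(D)\}$ for every set system $D$. I would then check, inside the $S_{3}$-action of $\mathcal{B}$, the conjugation identity $(D^{\times|A})^{\ast|E}=(D^{\ast|E})^{\ast\times\ast|A}$: since $\ast|E=(\ast|A)(\ast|(E\setminus A))$, the factor $\ast|(E\setminus A)$ commutes with every operator supported on $A$, and $(\ast|(E\setminus A))^{2}=\mathrm{id}$, so conjugating $\times|A$ by $\ast|E$ collapses to $\ast\times\ast|A$. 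Combining the size-reversal identity applied to $D^{\times|A}$ with this identity and the established equality $\mathcal{F}_{min}(D^{\times|A})=\mathcal{F}_{min}(D)$ gives $\mathcal{F}_{max}((D^{\ast})^{\ast\times\ast|A})=\mathcal{F}_{max}(D^{\ast})$ for every $D$; as $D\mapsto D^{\ast}$ is a bijection on set systems with ground set $E$, this says exactly $\mathcal{F}_{max}(D^{\ast\times\ast|A})=\mathcal{F}_{max}(D)$. Finally $\ast\times\ast$ and $\times\ast\times$ represent the same element of $\mathcal{B}$ (from $(\ast\times)^{3}=\mathrm{id}$) and hence act identically, so $\mathcal{F}_{max}(D^{\times\ast\times|A})=\mathcal{F}_{max}(D^{\ast\times\ast|A})=\mathcal{F}_{max}(D)$.

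The only delicate point throughout is the bookkeeping of which sets survive the symmetric difference at the extreme cardinality level; I expect the conjugation identity $(D^{\times|A})^{\ast|E}=(D^{\ast|E})^{\ast\times\ast|A}$ to be the step that most deserves a careful proof. One can also avoid this shortcut and treat $\mathcal{F}_{max}$ directly by the analogous single-element computation: $D^{\ast\times\ast|e}$ keeps the feasible sets containing $e$ unchanged and replaces those avoiding $e$ by $\mathcal{F}_{0}\,\Delta\,(\mathcal{F}_{1}-e)$, whose new members have size at most $r(D_{max})-1$, so the top cardinality level is undisturbed.
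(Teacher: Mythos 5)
Your argument is correct, but there is nothing in the paper to compare it with: Lemma~\ref{lem 2} is quoted from Brijder and Hoogeboom \cite{BRHH} and no proof is given in the text, so your write-up is a self-contained verification of a cited result rather than a variant of an in-paper argument. Your single-element description of $D^{\times|e}$ (the $e$-free feasible sets stay $\mathcal{F}_{0}$, the $e$-containing ones become $\mathcal{F}_{1}\,\Delta\,(\mathcal{F}_{0}+e)$) is exactly right, and the size bound $|F\cup e|\geq r(D_{min})+1$ for $F\in\mathcal{F}_{0}$ does rule out both new sets and cancellations at the bottom level, so the $\mathcal{F}_{min}$ claim and the induction on $|A|$ go through. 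The reduction of the $\mathcal{F}_{max}$ claim via the size-reversing bijection $F\mapsto E\setminus F$ and the conjugation identity $(D^{\times|A})^{\ast|E}=(D^{\ast|E})^{\ast\times\ast|A}$ is also sound: checking it element by element, the contribution at $e\in A$ is $\ast\ast\times\ast=\times\ast$ on both sides and $\ast$ alone off $A$, and the final identification $\ast\times\ast=\times\ast\times$ follows from $\ast^{2}=\times^{2}=(\ast\times)^{3}=1$ together with the $S_{3}$-action of \cite{BRHH} that the paper's preliminaries already invoke. Your closing alternative --- computing $D^{\ast\times\ast|e}$ directly and observing that it fixes the $e$-containing feasible sets and replaces the $e$-free ones by $\mathcal{F}_{0}\,\Delta\,(\mathcal{F}_{1}-e)$ --- is the cleaner route, since it avoids leaning on the group action; the only point left implicit there is that no cancellation can occur at the top level either, because a member of $\mathcal{F}_{1}-e$ of size $r(D_{max})$ would force a feasible set of size $r(D_{max})+1$, which is the same one-line argument you used at the bottom level.
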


\begin{proposition}\label{the 1}
Let $D=(E, \mathcal{F})$ be a set system and $A\in \mathcal{F}_{min}(D), B\in \mathcal{F}_{min}(D^{\ast})$. Then
\begin{description}
  \item[(1)] $D^{\bullet|A}$ is normal for $\bullet\in \{\ast, \ast\times, \times\ast, \ast\times\ast\};$
  \item[(2)] $D^{\times|B}$ is dual normal.
\end{description}
\end{proposition}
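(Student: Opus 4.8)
The plan is to prove each item by exhibiting an explicit feasible set of the required cardinality in the transformed set system. For item~(2), let $B\in\mathcal{F}_{min}(D^{\ast})$; unravelling the definition of $\ast$-duality, $B = E\Delta X$ for some $X\in\mathcal{F}(D)$, and since $\ast$-duality reverses the order of cardinalities, $B\in\mathcal{F}_{min}(D^{\ast})$ means $X\in\mathcal{F}_{max}(D)$, i.e.\ $B = E\setminus X$ where $X$ is a maximum feasible set of $D$. I want to show $E\in\mathcal{F}(D^{\times|B})$. Here I would invoke Lemma~\ref{lem 2}: since $\mathcal{F}_{max}(D)=\mathcal{F}_{max}(D^{\ast\times\ast|A})$ style identities are available, but more directly I would use the dual form — loop complementation on $B$ fixes $\mathcal{F}_{min}$, so by applying $\ast$ first one translates the claim ``$E$ is feasible in $D^{\times|B}$'' into ``$\emptyset$ is feasible in some loop complementation of $D^{\ast}$,'' and then one checks by the definition of $D^{\times|e}$ that successively adjoining the elements of $B$ to the empty set never removes $\emptyset$ from the feasible sets when each such element is itself not yet in the set. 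The cleanest route: $E\in\mathcal{F}(D^{\times|B})$ iff $\emptyset\in\mathcal{F}((D^{\times|B})^{\ast|E})$, and $(D^{\times|B})^{\ast|E} = (D^{\ast})^{?}$ up to the $S_3$-relations of $\mathcal{B}$; I would push the computation through the presentation $\langle \ast,\times \mid \ast^2,\times^2,(\ast\times)^3\rangle$ acting elementwise, reducing to the normal case handled in~(1).

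For item~(1), fix $A\in\mathcal{F}_{min}(D)$; I must show $\emptyset\in\mathcal{F}(D^{\bullet|A})$ for each of the four words $\bullet\in\{\ast,\ast\times,\times\ast,\ast\times\ast\}$. The case $\bullet=\ast$ is immediate: $D^{\ast|A}=(E,\{A\Delta X:X\in\mathcal{F}\})$, and taking $X=A$ gives $A\Delta A=\emptyset\in\mathcal{F}(D^{\ast|A})$. For the remaining three, the key observation is that loop complementation on a subset $S$ only ever \emph{adds} sets of the form $F\cup e$ with $e\notin F$ via symmetric difference, so it can only destroy a feasible set $F$ if $F\setminus e$ was also feasible — in particular, a \emph{minimal}-cardinality feasible set that is minimal among those avoiding the active elements is never destroyed. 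More precisely, I would argue: after performing $D^{\ast|A}$ we have $\emptyset$ feasible and $\emptyset$ is of minimum cardinality; by Lemma~\ref{lem 2}, $\mathcal{F}_{min}(D^{\ast|A})=\mathcal{F}_{min}((D^{\ast|A})^{\times|S})$ for any $S$, so $\emptyset$ remains feasible (indeed remains of minimum cardinality) after any further loop complementation. This settles $\bullet=\ast\times$ directly (apply $\times|A$ after $\ast|A$). For $\bullet=\ast\times\ast$ and $\bullet=\times\ast$ I would reduce to the previous cases using Proposition~\ref{pro 1} together with the $\mathcal{B}\cong S_3$ relations: e.g.\ $D^{\ast\times\ast|A} = D^{\times\ast\times|A}$, so it contains $(D^{\times|A})^{\ast\times|A}$-type structure, and one checks $\emptyset$ survives because $\times|A$ preserves $\mathcal{F}_{min}$ (hence $A$-minimal sets) and then the $\ast\times$ step is the case already done; the word $\times\ast$ needs $\emptyset\in\mathcal{F}(D^{\times\ast|A})$, which by $\mathcal{F}_{max}(D)=\mathcal{F}_{max}(D^{\times\ast\times|A})$ in Lemma~\ref{lem 2} I would dualise.

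The main obstacle I anticipate is item~(1) for the mixed words $\times\ast$ and $\ast\times\ast$: unlike the pure cases, here one cannot simply read off the empty set as a symmetric difference, and loop complementation genuinely reshuffles the feasible sets in a way that is sensitive to which elements of $A$ belong to which sets. The safe way to handle this is to be careful about the \emph{order} in which the elementwise operations are applied (the operators on distinct elements commute, but $\ast|e$ and $\times|e$ on the \emph{same} $e$ do not), and to track a single witness set through the entire sequence — I would take the witness to be $A$ itself before any $\ast$ and $\emptyset$ after the first $\ast$, and verify element by element, using the defining formula $\mathcal{F}' = \mathcal{F}\,\Delta\,\{F\cup e : F\in\mathcal{F},\ e\notin F\}$, that the witness stays feasible precisely because at no stage is the complementary smaller set also present. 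A short induction on $|A|$ formalises this and avoids any appeal to delta-matroid axioms, so the statement holds for arbitrary set systems as claimed.
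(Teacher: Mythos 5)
Your proposal is correct in substance and rests on the same engine as the paper's proof, namely Lemma~\ref{lem 2} (loop complementation preserves $\mathcal{F}_{min}$, and $\ast\times\ast$/$\times\ast\times$ preserves $\mathcal{F}_{max}$) together with commutation of operators on distinct elements and the relation $\ast\times\ast=\times\ast\times$; the difference is organizational. The paper works one element at a time: for each $e\in A$ it checks that every one of the four single-element twualities sends the minimum feasible set $A$ to the minimum feasible set $A-e$, then iterates over $A$. You instead factor the whole word over all of $A$ at once, e.g.\ $D^{\ast\times|A}=(D^{\ast|A})^{\times|A}$ and $D^{\ast\times\ast|A}=D^{\times\ast\times|A}=(D^{\times|A})^{\ast\times|A}$, and track a single witness ($A$ before the twist, $\emptyset$ after it); this is equally valid and needs no delta-matroid axioms, since once $\emptyset$ is feasible it is automatically of minimum cardinality and so survives any further loop complementation by Lemma~\ref{lem 2}. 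Two spots in your sketch should be tightened. For $\times\ast$ there is no need to ``dualise'' via $\mathcal{F}_{max}$: simply $A\in\mathcal{F}_{min}(D)=\mathcal{F}_{min}(D^{\times|A})$, hence $\emptyset=A\Delta A\in\mathcal{F}\bigl((D^{\times|A})^{\ast|A}\bigr)=\mathcal{F}(D^{\times\ast|A})$, which also feeds the $\ast\times\ast$ case. For (2), your unresolved ``$(D^{\times|B})^{\ast|E}=(D^{\ast})^{?}$'' does close: on $B$ the composite word is $\times\ast=\ast(\ast\times\ast)$ and on $E-B$ it is $\ast$, so $(D^{\times|B})^{\ast}=(D^{\ast})^{\ast\times\ast|B}$, which is normal by part (1) applied to $D^{\ast}$ with $B\in\mathcal{F}_{min}(D^{\ast})$, giving $E\in\mathcal{F}(D^{\times|B})$. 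This reduction of (2) to (1) is a genuinely different (and slightly slicker) route than the paper's, which argues (2) directly from the $\mathcal{F}_{max}$ half of Lemma~\ref{lem 2} ($E-B\in\mathcal{F}_{max}(D)=\mathcal{F}_{max}(D^{\times\ast\times|B})$, twist by $B$, then apply Lemma~\ref{lem 2} once more); the paper's version is self-contained, yours trades that for a short group-element computation.
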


\begin{proof}
{\bf (1)} We may assume that $A\neq\emptyset$, otherwise the conclusion is trivial. For any $e\in A$, since $A\in \mathcal{F}_{min}(D)$, it follows that $A\in \mathcal{F}_{min}(D^{\times|e})$ by Lemma \ref{lem 2} and $A-e\in \mathcal{F}_{min}(D^{*|e})$. Then $A-e\in \mathcal{F}_{min}(D^{*\times|e})$ by Lemma \ref{lem 2} and $A-e\in \mathcal{F}_{min}(D^{\times \ast|e})$. Thus $A-e\in \mathcal{F}_{min}(D^{\times\ast\times|e})$ by Lemma \ref{lem 2}. From the above, we have $A-e\in \mathcal{F}_{min}(D^{\bullet|e})$ for $\bullet\in \{\ast, \ast\times, \times\ast, \ast\times\ast\}$. In the same manner we can see that $\emptyset \in \mathcal{F}_{min}(D^{\bullet|A})$ for $\bullet\in \{\ast, \ast\times, \times\ast, \ast\times\ast\}$ and conclusion (1) then follows.

{\bf (2)} Since $B\in \mathcal{F}_{min}(D^{\ast})$, it follows that $E-B\in \mathcal{F}_{max}(D)$. Then $E-B\in \mathcal{F}_{max}(D^{\times\ast\times|B})$ by Lemma \ref{lem 2}, that is, $E-B\in \mathcal{F}(D^{\times\ast\times|B})$. Thus $E\in \mathcal{F}(D^{\times\ast\times\ast|B})$. Obviously,
$$E\in \mathcal{F}_{max}(D^{\times\ast\times\ast|B})=\mathcal{F}_{max}((D^{\times|B})^{\ast\times\ast|B}).$$
Then $E\in \mathcal{F}_{max}(D^{\times|B})$ by Lemma \ref{lem 2}, that is, $E\in \mathcal{F}(D^{\times|B})$. Thus $D^{\times|B}$ is dual normal.
\end{proof}

\begin{remark}

For investigation of partial-$\bullet$ polynomials of set systems for $\bullet\in \{\ast, \ast\times\ast\}$, Propositions \ref{pro 1} and \ref{the 1} motivate us to focus on normal set systems, and for $\bullet=\times$, to focus on dual normal set systems. But for $\ast\times$ or $\times\ast$, we cannot just focus on normal set systems. For example, let $D=(\{1\}, \{\{1\}\})$. We have $^{\partial}w_{D}^{\times\ast}(z)=2$. Observe that all normal set systems with ground set $\{1\}$ are $D_{1}=(\{1\}, \{\emptyset\})$ and $D_{2}=(\{1\}, \{\emptyset, \{1\}\}).$
Since $^{\partial}w_{D_{1}}^{\times\ast}(z)={^{\partial}w_{D_{2}}^{\times\ast}(z)}=1+z,$ it follows that there is no normal set system $D'$ such that $^{\partial}w_{D'}^{\times\ast}(z)={^{\partial}w_{D}^{\times\ast}(z)}$.
\end{remark}

The following theorem provides a link between partial-$\ast\bullet\ast$ and partial-$\bullet$ polynomials of set systems.

\begin{theorem}\label{th 2}
Let $D=(E, \mathcal{F})$ be a set system. Then for any $\bullet\in \mathcal{B}$,
$$^{\partial}w_{D}^{\ast\bullet\ast}(z)={^{\partial}w_{D^{\ast}}^{\bullet}(z)}.$$
\end{theorem}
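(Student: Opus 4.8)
The plan is to unwind both sides by the definition of the partial-$\bullet$ polynomial and exhibit a bijection on subsets of $E$ that matches widths. Concretely, I would start from
$$^{\partial}w_{D}^{\ast\bullet\ast}(z)=\sum_{A\subseteq E}z^{w(D^{\ast\bullet\ast\mid A})},$$
and the goal is to show that for every $A\subseteq E$ there is a corresponding $A'\subseteq E$ (in fact $A'=A$ will work) with $w(D^{\ast\bullet\ast\mid A})=w((D^{\ast})^{\bullet\mid A})$. Since the sum on the right-hand side is $\sum_{A\subseteq E}z^{w((D^{\ast})^{\bullet\mid A})}$, matching the exponents term-by-term over the same index set $A\subseteq E$ gives the identity.

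The key step is therefore the set-system identity $D^{\ast\bullet\ast\mid A}=(D^{\ast})^{\bullet\mid A}$, or at least an identity that holds up to something width-preserving. Here I would use that the operators $\ast$ and $\times$ act on different elements by commuting, and that on a single element they satisfy the $S_3$ relations of $\mathcal{B}=\langle \ast,\times \mid \ast^2,\times^2,(\ast\times)^3\rangle$. Writing $\bullet=b_1b_2\cdots b_n$ as a word in $\{\ast,\times\}$, the operation $D^{\ast\bullet\ast\mid A}$ means applying, elementwise on each $e\in A$, first $\ast$, then the word $\bullet$, then $\ast$ again. The subtlety is that $D^{\ast}=D^{\ast\mid E}$ applies $\ast$ to \emph{all} elements of $E$, not just those in $A$; so $(D^{\ast})^{\bullet\mid A}$ first twists every element, then applies $\bullet$ only on $A$. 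On elements outside $A$ the two sides differ by a single $\ast$ (namely $D^{\ast}$ has an extra twist on $E\setminus A$). So rather than a literal equality I would argue: $(D^{\ast})^{\bullet\mid A}=\left(D^{\ast\bullet\ast\mid A}\right)^{\ast\mid E}$, because conjugating the elementwise word $\bullet$ on $A$ by a global $\ast$ produces $\ast\bullet\ast$ on $A$ together with a leftover global $\ast$ that can be pulled outside; and the outermost global twist $\ast\mid E$ preserves width (stated in the excerpt: ``$\ast$-duality preserves width''). Hence $w\bigl((D^{\ast})^{\bullet\mid A}\bigr)=w\bigl(D^{\ast\bullet\ast\mid A}\bigr)$ for every $A$, and summing over $A$ finishes the proof.

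I expect the main obstacle to be getting the bookkeeping of global versus partial twists exactly right: one must be careful that $\ast\mid E$ commutes past $\bullet\mid A$ correctly on the $A$-coordinates (turning $\bullet$ into $\ast\bullet\ast$ via the $S_3$ relations applied coordinatewise) while on the $(E\setminus A)$-coordinates it simply contributes an overall $\ast$. Since twists and loop complementations on distinct elements commute, this decomposition over coordinates is legitimate, and on each single coordinate $e\in A$ the relevant statement $\ast\,(\bullet)\,\ast = \ast\bullet\ast$ is a tautology at the word level in $\mathcal{B}$. The only genuine content beyond this formal manipulation is the fact that a global $\ast$ does not change the width, which lets us discard the leftover $\ast\mid E$; everything else is routine symbol-pushing, so I would keep that part brief.
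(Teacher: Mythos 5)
Your proposal is correct and follows essentially the same route as the paper: your identity $(D^{\ast})^{\bullet|A}=\bigl(D^{\ast\bullet\ast|A}\bigr)^{\ast|E}$ is just a rearrangement of the paper's $D^{\ast\bullet\ast|A}=\bigl((D^{\ast})^{\bullet|A}\bigr)^{\ast}$, and both arguments then conclude via the width-invariance of the global twist together with the identity bijection $A\mapsto A$ on subsets.
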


\begin{proof}
For any $A\subseteq E$, we observe that doing partial-$\ast\bullet\ast$ on $A$ is the same as first doing $\ast$ to $E$, then doing $\bullet$ to $A$, and then doing $\ast$ to $E$ again, that is, $$D^{\ast\bullet\ast|A}=((D^{\ast})^{\bullet|A})^{\ast}.$$
Since $\ast$-duality preserves width, it follows that $$w(D^{\ast\bullet\ast|A})=w(((D^{\ast})^{\bullet|A})^{\ast})=w((D^{\ast})^{\bullet|A}).$$
Thus the partial-$\ast\bullet\ast$ polynomial of $D$ is identical to the partial-$\bullet$ polynomial of $D^{\ast}$.
\end{proof}

\section{Partial-twuality for a single element}

In this section, we discuss the numerical implications of partial-twualities on a single element $e$,
depending on the type of $e$.

\begin{lemma}[\cite{BRHH2}\label{lem 4}]
Let $D=(E, \mathcal{F})$ be a delta-matroid and $e\in E$ such that $r(D_{min})=r({D^{\ast|e}}_{min})$. Then $\mathcal{F}_{min}(D)=\mathcal{F}_{min}({D^{\ast|e}})$.
\end{lemma}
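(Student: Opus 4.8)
The statement to prove is Lemma~\ref{lem 4}: for a delta-matroid $D=(E,\mathcal{F})$ and $e\in E$ with $r(D_{min})=r({D^{\ast|e}}_{min})$, we have $\mathcal{F}_{min}(D)=\mathcal{F}_{min}(D^{\ast|e})$. The plan is to exploit the fact that twisting on the single element $e$ only changes feasible sets by symmetric difference with $\{e\}$, i.e.\ it either deletes $e$ from a set or inserts $e$ into it, so cardinalities shift by exactly $\pm 1$. The hypothesis $r(D_{min})=r({D^{\ast|e}}_{min})$ pins down that this minimum size is unchanged, and the job is to show the actual \emph{collections} of minimum-size sets coincide, not merely their common minimum cardinality.

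First I would set $m:=r(D_{min})=r({D^{\ast|e}}_{min})$ and take any $F\in\mathcal{F}_{min}(D)$, so $|F|=m$. Its image in $D^{\ast|e}$ is $F\Delta\{e\}$, which has size $m-1$ if $e\in F$ or $m+1$ if $e\notin F$. If $e\in F$ then $D^{\ast|e}$ would contain a feasible set of size $m-1<m=r({D^{\ast|e}}_{min})$, a contradiction; hence $e\notin F$, the image is $F$ itself (since $e\notin F$ means $F\Delta\{e\}$ has $e$ added — wait, more carefully: $F$ is feasible in $D$, so $F\Delta\{e\}=F\cup\{e\}$ is feasible in $D^{\ast|e}$, of size $m+1$; but also I need to locate a size-$m$ feasible set of $D^{\ast|e}$ that equals $F$). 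The cleaner route: every feasible set of $D^{\ast|e}$ is of the form $F'\Delta\{e\}$ for $F'\in\mathcal{F}(D)$, and conversely, so $\mathcal{F}(D^{\ast|e})=\{F'\Delta\{e\}:F'\in\mathcal{F}(D)\}$. A set of size $m$ in $\mathcal{F}(D^{\ast|e})$ thus comes from a set $F'\in\mathcal{F}(D)$ with $|F'\Delta\{e\}|=m$, meaning $|F'|=m+1$ and $e\in F'$, or $|F'|=m-1$ and $e\notin F'$; the latter is impossible since $m=r(D_{min})$. So $\mathcal{F}_{min}(D^{\ast|e})=\{F'\setminus\{e\}: F'\in\mathcal{F}(D),\ |F'|=m+1,\ e\in F'\}$, and symmetrically $\mathcal{F}_{min}(D)=\{G\setminus\{e\}:G\in\mathcal{F}(D^{\ast|e}),\ |G|=m+1,\ e\in G\}$ together with the sets in $\mathcal{F}_{min}(D)$ not containing $e$.

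The main obstacle is showing that the size-$m$ feasible sets of $D$ \emph{not} containing $e$ are exactly the size-$m$ feasible sets of $D^{\ast|e}$ not containing $e$, and that there are no "leftover" size-$m$ sets on one side. Here is where the delta-matroid exchange axiom must enter. Suppose $F\in\mathcal{F}_{min}(D)$ with $e\notin F$, and suppose toward contradiction that $F$ contributes nothing matching on the $D^{\ast|e}$ side, or conversely that $D^{\ast|e}$ has a size-$m$ set containing $e$; I would then produce, via the symmetric-exchange property applied to $F$ and such a witness set, a feasible set of one of the two delta-matroids of size $m-1$, contradicting minimality. Concretely: if $G=G_0\cup\{e\}\in\mathcal{F}(D^{\ast|e})$ with $|G|=m$, then $G_0\in\mathcal{F}(D)$ (since $G_0=G\Delta\{e\}$) with $|G_0|=m-1$, immediately contradicting $r(D_{min})=m$. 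This shows \emph{no} size-$m$ feasible set of $D^{\ast|e}$ contains $e$; by the symmetric argument with the roles of $D$ and $D^{\ast|e}$ swapped (noting $(D^{\ast|e})^{\ast|e}=D$ and the hypothesis is symmetric in the two), no size-$m$ feasible set of $D$ contains $e$ either. Therefore a set $F$ of size $m$ is feasible in $D$ iff $e\notin F$ and $F=F\Delta\{e\}\setminus\{e\}$... rather, iff $e\notin F$ and $F\in\mathcal{F}(D)$ iff $F\cup\{e\}\in\mathcal{F}(D^{\ast|e})$, which forces $|F\cup\{e\}|=m+1>m$, so that does not directly give membership in $\mathcal{F}_{min}(D^{\ast|e})$. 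I should instead observe directly: for $F$ with $e\notin F$, $F\in\mathcal{F}(D)\iff F=F\Delta\{e\}\Delta\{e\}$ and $F\Delta\{e\}\in\mathcal{F}(D^{\ast|e})$; but I want $F\in\mathcal{F}(D^{\ast|e})$, equivalently $F\Delta\{e\}=F\cup\{e\}\in\mathcal{F}(D)$. So the remaining content is: for $e\notin F$, $|F|=m$: $F\in\mathcal{F}(D)\iff F\cup\{e\}\in\mathcal{F}(D^{\ast|e})\iff F\cup\{e\}\in\mathcal{F}(D)$ after another twist... This is getting circular, so the actual proof surely invokes the delta-matroid axiom once more: given $F\in\mathcal{F}(D)$, $|F|=m$, $e\notin F$, I must show $F\in\mathcal{F}(D^{\ast|e})$, i.e.\ $F\cup\{e\}\in\mathcal{F}(D)$. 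Applying symmetric exchange to $F$ and $F\cup\{e\}$... but $F\cup\{e\}$ is the target, not a hypothesis. The correct move: take $F\in\mathcal{F}_{min}(D)$ with $e\notin F$ and \emph{any} $G\in\mathcal{F}(D^{\ast|e})$ with $e\in G$, $|G|=m+1$ (which exists since $m$ is achieved in $D^{\ast|e}$ by some set, and we showed that set avoids $e$, so... hmm, actually the size-$m$ sets of $D^{\ast|e}$ all avoid $e$, and they equal $H\cup\{e\}$ for $H\in\mathcal{F}(D)$ of size $m-1$ — impossible — so they equal $H\in\mathcal{F}(D)$ with $|H|=m$, $e\notin H$; so $\mathcal{F}_{min}(D^{\ast|e})\subseteq\mathcal{F}_{min}(D)$ directly, and the reverse by symmetry). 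This last parenthetical, once cleaned up, \emph{is} the proof: $G\in\mathcal{F}_{min}(D^{\ast|e})\Rightarrow G\Delta\{e\}\in\mathcal{F}(D)$; since $e\notin G$ (shown above) this is $G\cup\{e\}$, of size $m+1$ — not $m$! So $G$ itself need not be in $\mathcal{F}(D)$. I therefore expect the genuine argument to require the full delta-matroid symmetric-exchange axiom to bridge $\mathcal{F}_{min}(D)$ and $\mathcal{F}_{min}(D^{\ast|e})$, and anticipate that invoking it correctly — choosing the right pair $X,Y\in\mathcal{F}$ and the right element $u\in X\Delta Y$ — is the crux; the cardinality bookkeeping around $e$ is routine once that exchange step is in place.
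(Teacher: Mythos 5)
First, note that the paper itself gives no proof of this lemma: it is quoted from Brijder and Hoogeboom \cite{BRHH2}, so there is no in-paper argument to compare against; your proposal must therefore stand on its own, and as it stands it has a genuine gap. What you actually establish is only the easy bookkeeping: writing $m=r(D_{min})=r({D^{\ast|e}}_{min})$, no feasible set of size $m$ of $D$ or of $D^{\ast|e}$ can contain $e$ (otherwise twisting by $e$ produces a feasible set of size $m-1$ on the other side), and consequently $\mathcal{F}_{min}(D^{\ast|e})=\{H\setminus\{e\}\colon H\in\mathcal{F}(D),\ e\in H,\ |H|=m+1\}$. This reduces the lemma to the statement: for an $m$-set $F$ with $e\notin F$, one has $F\in\mathcal{F}(D)$ if and only if $F\cup\{e\}\in\mathcal{F}(D)$. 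That equivalence is the entire content of the lemma, and your text explicitly stops before proving it (``invoking it correctly \dots is the crux''), so beyond the reduction nothing is demonstrated.

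Moreover, the single application of the symmetric-exchange axiom you gesture at does not close the gap. Take $F\in\mathcal{F}_{min}(D)$ and a witness $H\in\mathcal{F}(D)$ with $e\in H$ and $|H|=m+1$ (which exists by the hypothesis), and apply the axiom to $X=F$, $Y=H$ with $u=e\in X\Delta Y$: it returns some $v\in X\Delta Y$ with $F\Delta\{e,v\}\in\mathcal{F}(D)$. The case $v=e$ gives the desired $F\cup\{e\}\in\mathcal{F}(D)$, and the case $v\in F\setminus H$ gives a contradiction (a feasible $m$-set containing $e$), but the case $v\in H\setminus F$, $v\neq e$ merely produces the feasible set $F\cup\{e,v\}$ of size $m+2$, which is neither the conclusion nor a contradiction; the analogous obstruction appears in the reverse inclusion. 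So one genuinely needs a further idea --- for instance a minimal-choice or induction argument (pick the feasible set containing $e$ minimizing the symmetric difference with $F$ and exchange away discrepancies), or an appeal to structural facts such as $\mathcal{F}_{min}(D)$ forming the bases of a matroid --- and this is exactly the step you defer. The proposal correctly locates the difficulty but does not overcome it, so it does not constitute a proof.
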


\begin{remark}
Lemma \ref{lem 4} is not true for set systems. For example, let $$D=(\{1, 2, 3\}, \{\{1\}, \{2, 3\}\}).$$ We know $r(D_{min})=r({D^{\ast|2}}_{min})=1$. But $$\mathcal{F}_{min}(D)=\{\{1\}\}$$
and
$$\mathcal{F}_{min}({D^{\ast|2}})=\{\{3\}\}.$$
\end{remark}

\begin{lemma}\label{lem 7}
Let $D=(E, \mathcal{F})$ be a delta-matroid and $e\in E$. If $e$ is a non-orientable loop, then for any $A\in \mathcal{F}_{min}(D)$, $A\cup e\in \mathcal{F}(D)$.
\end{lemma}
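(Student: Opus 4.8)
The plan is to unpack the definition of a non-orientable loop and use the delta-matroid exchange axiom. Recall that $e$ being a non-orientable loop means $e$ is a ribbon loop (a loop in $D_{min}$) and $e$ is still a ribbon loop in $D^{*|e}$; by the observations in Section 2.6, the primal type of $e$ being $t$ in $D$ means that for any $A \in \mathcal{F}_{min}(D)$ we have $e \notin A$, and there exists $B \in \mathcal{F}_{min+1}(D)$ with $e \in B$. So fix an arbitrary $A \in \mathcal{F}_{min}(D)$ and the witnessing set $B \in \mathcal{F}_{min+1}(D)$ with $e \in B$; note $|B| = |A| + 1$ and $e \in B \Delta A$ (since $e \in B$, $e \notin A$).

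The key step is to apply the delta-matroid exchange axiom to $B, A \in \mathcal{F}$ and the element $e \in B \Delta A$: there exists $v \in B \Delta A$ (possibly $v = e$) such that $B \Delta \{e, v\} \in \mathcal{F}$. First I would rule out $v = e$: if $v = e$ then $B \Delta \{e\} = B - e \in \mathcal{F}$, but $|B - e| = |A| = r(D_{min})$ while $e \notin$ any minimum feasible set, a contradiction. So $v \neq e$, and $v \in B \Delta A$ with $v \neq e$ means $v \in (B - A) \cup (A - B)$, $v \neq e$. Then $B \Delta \{e, v\}$ is a feasible set; I must now argue it has the right size and contains $e$. Since $e \in B$, removing $e$ from $B$ and then symmetric-differencing by $v$: if $v \in B$ then $B \Delta\{e,v\}$ has size $|B| - 2 = r(D_{min}) - 1 < r(D_{min})$, impossible; hence $v \notin B$, so $v \in A - B$, and $B \Delta \{e, v\} = (B - e) \cup \{v\}$ has size $|B| = r(D_{min}) + 1$, and crucially $e \notin B \Delta \{e, v\}$ — wrong direction.

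This tells me the exchange should be run the other way: apply the axiom to $A, B \in \mathcal{F}$ and $e \in A \Delta B$ instead. I would pick a minimum feasible set $A$ and consider whether $A \cup e$ is obtainable. Actually the cleaner route: since $e$ is non-orientable, $e$ is a ribbon loop in $D^{*|e}$ too, and one computes what this says about $\mathcal{F}_{min}(D^{*|e})$; translating via the twist $A \leftrightarrow A \Delta e$ should force, for each $A \in \mathcal{F}_{min}(D)$, that $A \Delta e = A \cup e$ is feasible in $D$ of size $r(D_{min}) + 1$. Concretely, I expect to use that $D^{*|e}_{min}$ has the same rank $r(D_{min})$ (because $e$ non-orientable rules out the rank dropping, by the characterization of orientable vs.\ non-orientable loops), invoke Lemma~\ref{lem 4} to get $\mathcal{F}_{min}(D) = \mathcal{F}_{min}(D^{*|e})$, and then note that $A \in \mathcal{F}_{min}(D^{*|e})$ means $A \Delta e \in \mathcal{F}(D)$ with $A \Delta e = A \cup e$ since $e \notin A$.

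The main obstacle will be pinning down exactly why $r(D^{*|e}_{min}) = r(D_{min})$ when $e$ is non-orientable — i.e.\ ruling out that twisting by $e$ produces a strictly smaller minimum feasible set — since for an orientable loop the rank does drop by one. This is precisely the dichotomy encoded in the definition of orientable versus non-orientable, so I would either cite it directly or give the short argument: if some $A' \in \mathcal{F}(D^{*|e})$ had $|A'| < r(D_{min})$, then $A' \Delta e \in \mathcal{F}(D)$ has size at most $r(D_{min})$, so it is minimum and contains $e$ (as $A'$ either gains or loses $e$), contradicting that $e$ lies in no minimum feasible set of $D$ — unless $A'$ loses $e$, i.e.\ $e \in A'$, $|A'| = r(D_{min}) + 1$... which again contradicts $|A'| < r(D_{min})$. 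Hence no such $A'$ exists, $r(D^{*|e}_{min}) = r(D_{min})$, Lemma~\ref{lem 4} applies, and the conclusion follows for every $A \in \mathcal{F}_{min}(D)$. \cvd
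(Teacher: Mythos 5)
Your eventual route (after abandoning the exchange-axiom attempt) is the same as the paper's: establish $r({D^{\ast|e}}_{min})=r(D_{min})$, invoke Lemma~\ref{lem 4} to get $\mathcal{F}_{min}(D)=\mathcal{F}_{min}(D^{\ast|e})$, and translate $A\in\mathcal{F}(D^{\ast|e})$ back to $A\Delta e=A\cup e\in\mathcal{F}(D)$. However, your justification of the pivotal rank equality has a genuine gap, and it aims at the wrong direction. Your ``short argument'' only excludes a feasible set of $D^{\ast|e}$ of size strictly less than $r(D_{min})$, i.e.\ it proves $r({D^{\ast|e}}_{min})\geq r(D_{min})$; but that inequality holds for \emph{every} ribbon loop, orientable or not, since it uses nothing beyond the fact that no minimum feasible set of $D$ contains $e$. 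What actually has to be ruled out is the opposite possibility $r({D^{\ast|e}}_{min})=r(D_{min})+1$, which is precisely what happens when $e$ is an orientable loop (your aside that ``for an orientable loop the rank does drop by one'' is backwards: for primal type $u$ the minimum rank rises by one, while it drops for type $p$), and in that case the conclusion of the lemma is false --- e.g.\ $D=(\{e,f\},\{\emptyset,\{e,f\}\})$ has $\emptyset\in\mathcal{F}_{min}(D)$ but $\{e\}\notin\mathcal{F}(D)$. Since your argument for the equality never uses non-orientability beyond ribbon-loop-ness, it would ``prove'' the lemma for orientable loops too, which signals the gap.

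The missing step is exactly the upper bound, and you already fixed the needed object in your first paragraph: because the primal type of $e$ is $t$, there is $B\in\mathcal{F}_{min+1}(D)$ with $e\in B$, whence $B\Delta e=B-e\in\mathcal{F}(D^{\ast|e})$ has size $r(D_{min})$, so $r({D^{\ast|e}}_{min})\leq r(D_{min})$. This one line (which is the paper's argument) combined with your lower bound yields the equality, and then Lemma~\ref{lem 4} and the twist translation finish the proof as you describe. Alternatively, one can use non-orientability directly: if $r({D^{\ast|e}}_{min})=r(D_{min})+1$, then for $A\in\mathcal{F}_{min}(D)$ the set $A\cup e$ would be a minimum feasible set of $D^{\ast|e}$ containing $e$, so $e$ would not be a ribbon loop in $D^{\ast|e}$, contradicting non-orientability. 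Note also that your fallback of ``citing the dichotomy directly'' is not available here: the rank behaviour by type is Table~\ref{ta 2}, established in the proof of Theorem~\ref{lem 5}, which itself invokes Lemma~\ref{lem 7}, so that citation would be circular.
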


\begin{proof}
Since the primal type of $e$ is $t$ in $D$, it follows that $e\notin A$ and there exists $B\in \mathcal{F}_{min+1}(D)$ such that $e\in B$. Then $B-e\in \mathcal{F}_{min}(D^{\ast|e})$. We have $r(D_{min})=r({D^{\ast|e}}_{min})$ and hence $\mathcal{F}_{min}(D)=\mathcal{F}_{min}({D^{\ast|e}})$ by Lemma \ref{lem 4}. Then $A\in \mathcal{F}_{min}(D^{\ast|e})$, that is, $A\in \mathcal{F}(D^{\ast|e})$. Thus, $A\cup e\in \mathcal{F}(D)$.
\end{proof}

\begin{lemma}[\cite{BECN}]\label{lem 9}
If $X$ is any feasible set in a delta-matroid $D$, then there exist $A\in \mathcal{F}_{min}(D)$ and $B\in \mathcal{F}_{max}(D)$ such that $A\subseteq X \subseteq B$.
\end{lemma}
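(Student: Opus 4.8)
The plan is to prove the two containments independently: first I would produce a minimum-size feasible set contained in $X$, and then deduce a maximum-size feasible set containing $X$ by passing to the twist dual $D^{\ast}$ (one could equally well repeat the argument ``from the top'', but using duality is shorter).

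Fix $X\in\mathcal{F}(D)$. Since a delta-matroid is proper, $\mathcal{F}_{min}(D)\neq\emptyset$; among all $A\in\mathcal{F}_{min}(D)$ I would choose one for which $|A\setminus X|$ is minimum, and claim that this $A$ satisfies $A\subseteq X$. Suppose not, and pick $e\in A\setminus X$. Then $e\in A\Delta X$, so the defining exchange property applied to the feasible sets $A$ and $X$ produces some $f\in A\Delta X$ (possibly $f=e$) with $A\Delta\{e,f\}\in\mathcal{F}$. I would then split into three cases. If $f=e$, then $A\setminus e=A\Delta\{e\}$ is a feasible set of size $r(D_{min})-1$, which is impossible. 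If $f\in A\setminus X$ with $f\neq e$, then $A\setminus\{e,f\}$ is feasible of size $r(D_{min})-2$, again impossible. Hence $f\in X\setminus A$; but then $A':=A\Delta\{e,f\}=(A\setminus e)\cup f$ is feasible of size $r(D_{min})$, so $A'\in\mathcal{F}_{min}(D)$, while $A'\setminus X=(A\setminus X)\setminus\{e\}$ has strictly smaller size than $A\setminus X$, contradicting the choice of $A$. Thus $A\subseteq X$, and this $A$ is the required minimum-size feasible set.

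For the second containment, recall that the twist $D^{\ast}=D^{\ast\mid E}$ is again a delta-matroid, with $\mathcal{F}(D^{\ast})=\{E\setminus F:F\in\mathcal{F}\}$. The complementation map $F\mapsto E\setminus F$ is a bijection between $\mathcal{F}(D)$ and $\mathcal{F}(D^{\ast})$ that reverses both inclusion and cardinality, so it carries $\mathcal{F}_{max}(D)$ onto $\mathcal{F}_{min}(D^{\ast})$ and sends the feasible set $X$ of $D$ to the feasible set $E\setminus X$ of $D^{\ast}$. Applying the first part to $D^{\ast}$ and $E\setminus X$ yields $A'\in\mathcal{F}_{min}(D^{\ast})$ with $A'\subseteq E\setminus X$; writing $A'=E\setminus B$ with $B\in\mathcal{F}_{max}(D)$, the inclusion $A'\subseteq E\setminus X$ is exactly $X\subseteq B$, which finishes the proof.

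The only point that needs care is the extremal choice in the first part: minimising $|A\setminus X|$ (rather than, say, $|A\Delta X|$) is precisely what makes the element-swap case $f\in X\setminus A$ — the case that leaves $|A|$ unchanged — contradict minimality, whereas the other two cases are ruled out simply because $|A|$ already equals the minimum feasible-set size. One must also remember to allow the degenerate possibility $f=e$ permitted by the exchange property.
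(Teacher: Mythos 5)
Your proof is correct. Note that the paper does not actually prove this statement: it is quoted from Bonin--Chun--Noble \cite{BECN} as a known result, so there is no in-paper argument to compare yours against. Your argument is the standard one and is complete: the extremal choice of $A\in\mathcal{F}_{min}(D)$ minimizing $|A\setminus X|$, combined with the symmetric exchange axiom applied to $A$ and $X$ at an element $e\in A\setminus X$, correctly disposes of all three cases (the cases $f=e$ and $f\in A\setminus X$ would produce feasible sets of size $r(D_{min})-1$ or $r(D_{min})-2$, and the case $f\in X\setminus A$ strictly decreases $|A\setminus X|$ while staying in $\mathcal{F}_{min}(D)$). The reduction of the second containment to the first via the twist $D^{\ast}=D^{\ast|E}$ is also sound, since the twist of a delta-matroid is again a delta-matroid (Bouchet, as recorded in the paper) and complementation exchanges $\mathcal{F}_{max}(D)$ with $\mathcal{F}_{min}(D^{\ast})$ and reverses inclusions, so $A'\subseteq E\setminus X$ translates exactly into $X\subseteq B$ with $B=E\setminus A'\in\mathcal{F}_{max}(D)$.
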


\begin{theorem}\label{lem 5}
Let $D=(E, \mathcal{F})$ be a vf-safe delta-matroid and $e\in E$. Table \ref{ta 1} gives the value of $w(D^{\bullet|e})-w(D)$ for any $\bullet\in \mathcal{B}$.
\begin{table}[!t]
\centering
\caption{The difference $w(D^{\bullet|e})-w(D)$ for any $\bullet\in \mathcal{B}$.}
\label{ta 1}
\begin{tabular}{c|ccccc}
  \hline
  Type of $e$& $~~~\ast~~~$ & $~~~\times~~~$ & $~~~\ast\times~~~$ & $~~~\times\ast~~~$ & $~~~\ast\times\ast~~~$ \\\hline
  $pp$        & $+2$      & $+1$        & $+2$            & $+2$            & $+1$ \\
  $uu$        & $-2$      & 0         & $-1$            & $-1$            & 0 \\
  $pu$        & 0       & 0         & $+1$            & 0             & $+1$ \\
  $up$        & 0       & $+1$        & 0             & $+1$            & 0 \\
  $tp$        & $+1$      & $+1$        & $+1$            & 0             & $-1$ \\
  $tu$        & $-1$      & 0         & 0             & $-2$            & $-1$ \\
  $pt$        & $+1$      & $-1$        & 0             & $+1$            & $+1$ \\
  $ut$        & $-1$      & $-1$        & $-2$            & 0             & 0 \\
  $tt$        & 0       & $-1$        & $-1$            & $-1$            & $-1$ \\
  \hline
\end{tabular}
\end{table}
\end{theorem}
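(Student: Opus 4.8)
The plan is to reduce the whole $5\times 9$ table to two atomic computations — the effect on width of a single twist $\ast|e$ and of a single loop complementation $\times|e$ — and then derive every other entry by composition, using that $\mathcal{B}\cong S_{3}$ is generated by $\ast$ and $\times$; because $D$ is vf-safe, every intermediate set system produced along the way is again a delta-matroid, so Lemmas~\ref{lem 4}, \ref{lem 7} and \ref{lem 9} are available at each stage. I will track $r(D_{min})$ and $r(D_{max})$ separately, since $w=r(D_{max})-r(D_{min})$. For the twist, the feasible sets of $D^{\ast|e}$ are obtained from those of $D$ by deleting $e$ from the ones that contain it and adjoining $e$ to the ones that do not, so $r((D^{\ast|e})_{min})$ and $r((D^{\ast|e})_{max})$ are read off directly from the smallest and largest sizes of feasible sets containing, respectively avoiding, $e$. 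The characterisations of primal and dual type listed in Section~2 express the primal type of $e$ through the position of these sizes relative to $r(D_{min})$ and the dual type through their position relative to $r(D_{max})$; from this one gets $r((D^{\ast|e})_{min})-r(D_{min})=-1,+1,0$ according as the primal type is $p,u,t$, and $r((D^{\ast|e})_{max})-r(D_{max})=+1,-1,0$ according as the dual type is $p,u,t$. Subtracting yields the $\ast$-column and records the atomic width change $C_{\ast}$ of a twist as a function of the type of $e$; refining the same bookkeeping to $\mathcal{F}_{min+1}$ and $\mathcal{F}_{max-1}$ shows that $\ast|e$ acts on the type of $e$ by the transposition $p\leftrightarrow u$ in each coordinate (fixing $t$).

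For the loop complementation, Lemma~\ref{lem 2} gives $\mathcal{F}_{min}(D^{\times|e})=\mathcal{F}_{min}(D)$, so $r(D_{min})$ does not move and only $r(D_{max})$ can change; hence the $\times$-column depends only on the dual type of $e$. The $e$-free feasible sets of $D^{\times|e}$ coincide with those of $D$, whereas $H\cup e\in\mathcal{F}(D^{\times|e})$ iff exactly one of $H$, $H\cup e$ lies in $\mathcal{F}(D)$. For dual type $p$ or $u$ an elementary size argument gives $r((D^{\times|e})_{max})-r(D_{max})=+1$, respectively $0$. The delicate case is dual type $t$, where one must show both that $D^{\times|e}$ has no feasible set of size $\geq r(D_{max})$ and that a feasible set of size $r(D_{max})-1$ survives; here the delta-matroid structure is genuinely needed. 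Applying Lemma~\ref{lem 7} to $D^{\ast}$ shows $A-e\in\mathcal{F}(D)$ for every $A\in\mathcal{F}_{max}(D)$, which kills one subcase, and Lemma~\ref{lem 9} kills the other — in which an $e$-free feasible set $H$ of size $r(D_{max})-1$ would have $H\cup e\notin\mathcal{F}(D)$ — because the minimum feasible set sandwiched inside $E-H$ in $D^{\ast}$ would then be forced to equal $E-(H\cup e)$, contradicting $H\cup e\notin\mathcal{F}(D)$. This gives $r((D^{\times|e})_{max})-r(D_{max})=+1,0,-1$ for dual type $p,u,t$, i.e. the $\times$-column and the atomic change $C_{\times}$; inspecting $\mathcal{F}_{min+1}$ in addition shows $\times|e$ acts on the type by $u\leftrightarrow t$ on the primal coordinate and $p\leftrightarrow t$ on the dual coordinate.

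The three remaining columns follow by composition along $e$: $D^{\ast\times|e}=(D^{\ast|e})^{\times|e}$, $D^{\times\ast|e}=(D^{\times|e})^{\ast|e}$, and $D^{\ast\times\ast|e}=((D^{\ast|e})^{\times|e})^{\ast|e}$. The total width change is the sum of the atomic changes $C_{\ast}$, $C_{\times}$ incurred at the successive steps, each evaluated at the type of $e$ produced by the preceding operations (known from the transposition rules above); carrying this out produces the $\ast\times$-, $\times\ast$- and $\ast\times\ast$-columns, which are then matched entrywise with Table~\ref{ta 1}. As a shortcut for the last column, the defining relations of $\mathcal{B}$ together with the commutation of operations on disjoint element sets show that applying $\ast|E$, then $\ast\times\ast|e$, then $\ast|E$ is the same as applying $\times|e$; since $\ast$-duality preserves width this gives $w(D^{\ast\times\ast|e})-w(D)=w((D^{\ast})^{\times|e})-w(D^{\ast})$, which by the $\times$-column depends only on the dual type of $e$ in $D^{\ast}$, that is, on the primal type of $e$ in $D$.

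The main obstacle is the loop-complementation step at dual type $t$: unlike a twist, $\times|e$ does not simply shift feasible-set sizes, and controlling the new maximum feasible set genuinely requires the exchange axiom, funnelled through Lemmas~\ref{lem 7} and \ref{lem 9}. A secondary point demanding care is the $\mathcal{F}_{min+1}$/$\mathcal{F}_{max-1}$ bookkeeping behind the type-transformation rules, together with the observation — needed to legitimise every composition — that all intermediate set systems remain delta-matroids precisely because $D$ is assumed vf-safe.
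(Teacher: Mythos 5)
Your proposal is correct and follows essentially the same route as the paper: you compute the $\ast$ and $\times$ columns by a case analysis on the primal and dual types of $e$ (invoking Lemmas \ref{lem 2}, \ref{lem 7} and \ref{lem 9} exactly where the paper does, namely in the type-$t$ cases for loop complementation), record how the type of $e$ transforms under each operation, and then obtain the $\ast\times$, $\times\ast$ and $\ast\times\ast$ columns by composition. Your conjugation shortcut for the $\ast\times\ast$ column via Theorem \ref{th 2} is only a minor cosmetic variant of the paper's direct composition argument.
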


\begin{proof}
The three possible primal types (and dual types) of $e$ in $D$ are as follows:


\begin{description}
  \item[Case 1.] If the primal type of $e$ is $p$ in $D$, there exists $A\in \mathcal{F}_{min}(D)$ such that $e\in A$. Then $A-e\in \mathcal{F}_{min}(D^{*|e})$. Thus $$r({D^{*|e}}_{min})=r(D_{min})-1$$ and the primal types of $e$  are $u$ and $p$ in $D^{*|e}$ and $D^{\times|e}$, respectively.
  \item[Case 2.] If the primal type of $e$ is $u$ in $D$, then for any $A\in \mathcal{F}_{min}(D)\cup \mathcal{F}_{min+1}(D)$, $e\notin A.$ Thus $$r({D^{*|e}}_{min})=r(D_{min})+1$$ and the types of $e$  are $p$ and $t$ in $D^{*|e}$ and $D^{\times|e}$, respectively.
  \item[Case 3.] If the primal type of $e$ is $t$ in $D$, then for any $A\in \mathcal{F}_{min}(D)$, $e\notin A$, and there exists $B\in \mathcal{F}_{min+1}(D)$ such that $e\in B.$ Thus $$r({D^{*|e}}_{min})=r(D_{min})$$ and the primal types of $e$ is $t$ in $D^{*|e}$. By Lemma \ref{lem 7}, we have $A\cup e\in \mathcal{F}_{min+1}(D)$ for any $A\in \mathcal{F}_{min}(D)$. Then $A\cup e\notin \mathcal{F}(D^{\times|e}).$ Furthermore, we know that for any $B\in \mathcal{F}_{min+1}(D)$ containing $e$, $B-e\in \mathcal{F}_{min}(D)$,  otherwise there is no $A'\in \mathcal{F}_{min}(D)$ such that $A'\subseteq B$, contradicting Lemma \ref{lem 9}. Since $\mathcal{F}_{min}(D^{\times|e})=\mathcal{F}_{min}(D)$,  it follows that there is no $B'\in\mathcal{F}_{min}(D^{\times|e})\cup \mathcal{F}_{min+1}(D^{\times|e})$ such that $e\in B'$. Then the primal type of $e$ is $u$ in $D^{\times|e}$.
 \end{description}

       Here, we give a summary of Cases 1, 2 and 3 as shown in Table \ref{ta 2}.


\begin{table}[!t]
\centering
\caption{A summary of Cases 1, 2 and 3.}
\label{ta 2}
\begin{tabular}{llll}
  \hline
  \multicolumn{3}{c}{Primal type of $e$}&\multirow{2}{*}{$~~~~r({D^{*|e}}_{min})~~$}\\\cline{1-3}
  $D$~~~~&$~~~~D^{*|e}~~~~$&~~~~$D^{\times|e}~~~~$~~~~&\multirow{2}{*}{}\\\hline
  $p$    &   ~~~~~~$u$          &   ~~~~~~$p$   &   $~~~~r(D_{min})-1$            \\
  $u$    &   ~~~~~~$p$          &   ~~~~~~$t$   &   $~~~~r(D_{min})+1$           \\
  $t$    &   ~~~~~~$t$          &   ~~~~~~$u$   &   $~~~~r(D_{min})$            \\
  \hline
\end{tabular}
\end{table}

\begin{description}
  \item[Case 4.] If the  dual type of $e$ is $p$ in $D$, there exists $A\in \mathcal{F}_{max}(D)$ such that $e\notin A$. Then $A\cup e\in \mathcal{F}_{max}(D^{*|e})\cap \mathcal{F}_{max}(D^{\times|e})$. Thus $$r({D^{\ast|e}}_{max})=r({D^{\times|e}}_{max})=r(D_{max})+1$$ and the  dual types of $e$  are $u$ and $t$ in $D^{*|e}$ and $D^{\times|e}$, respectively.
  \item[Case 5.] If the dual type of $e$ is $u$ in $D$, then for any $A\in \mathcal{F}_{max}(D)\cup \mathcal{F}_{max-1}(D)$, $e\in A$. Thus $$r({D^{*|e}}_{max})=r(D_{max})-1$$ and $$r({D^{\times|e}}_{max})=r(D_{max})$$ and the  dual types of $e$  are $p$ and $u$ in $D^{*|e}$ and $D^{\times|e}$, respectively.
  \item[Case 6.] If the dual type of $e$ is $t$ in  $D$, then for any $A\in \mathcal{F}_{max}(D)$, $e\in A$. Thus $E-A\in \mathcal{F}_{min}(D^{*})$ and $(E-A)\cup e \in \mathcal{F}(D^{*})$ by Lemma \ref{lem 7}. It follows that $A-e\in \mathcal{F}_{max-1}(D)$. We have $$r({D^{*|e}}_{max})=r(D_{max})$$ and the dual type of $e$ is $t$ in $D^{*|e}$. Moreover, we observe that for any $B\in \mathcal{F}_{max-1}(D)$ not containing $e$, $B\cup e\in \mathcal{F}_{max}(D)$, otherwise there is no $B'\in \mathcal{F}_{max}(D)$ such that $B\subseteq B'$, contradicting Lemma \ref{lem 9}. It follows that $$r({D^{\times|e}}_{max})=r(D_{max})-1$$ and the dual type of $e$  is $p$ in $D^{\times|e}$, respectively.
\end{description}

Here, we provide a summary of Cases 4, 5 and 6 as shown in Table \ref{ta 3}.
\begin{table}[!t]
\centering
\caption{A summary of Cases 4, 5 and 6}
\label{ta 3}
\begin{tabular}{lllll}
  \hline
  \multicolumn{3}{c}{Dual type of $e$}&\multirow{2}{*}{~~$r({D^{*|e}}_{max})$~~}&\multirow{2}{*}{~~$r({D^{\times|e}}_{max})$~~}\\\cline{1-3}
  $D$&$~~D^{*|e}~~$&~~$D^{\times|e}~~$&\multirow{2}{*}{}&\multirow{2}{*}{}\\\hline
  $p$~~    &   ~~$u$          &~~$t$     &~~$r(D_{max})+1$ &  $~~r(D_{max})+1$          \\
  $u$~~    &   ~~$p$          &~~$u$     &~~$r(D_{max})-1$ &  $~~r(D_{max})$          \\
  $t$~~    &   ~~$t$          &~~$p$     &~~$r(D_{max})$   &  $~~r(D_{max})-1$        \\
  \hline
\end{tabular}
\end{table}
Then the the widths of $D^{*|e}$ and $D^{\times|e}$ can be calculated by Tables \ref{ta 2} and \ref{ta 3} as shown in Table \ref{ta 4}.
\begin{table}[!t]
\centering
\caption{The the widths of $D^{*|e}$ and $D^{\times|e}$ }
\label{ta 4}
\begin{tabular}{c|l|l|l|l|l}
  \hline
  Type of $e$&$r({D^{*|e}}_{min})$&$r({D^{*|e}}_{max})$&$r({D^{\times|e}}_{max})$&$w(D^{*|e})$&$w(D^{\times|e})$\\ \hline
  $pp$          &$r(D_{min})-1$       & $r(D_{max})+1$ &$r(D_{max})+1$      &$w(D)+2$ & $w(D)+1$           \\
  $uu$          &$r(D_{min})+1$       & $r(D_{max})-1$ &$r(D_{max})$     &$w(D)-2$  & $w(D)$         \\
  $pu$          &$r(D_{min})-1$       & $r(D_{max})-1$ &$r(D_{max})$    &$w(D)$   &  $w(D)$        \\
  $up$          &$r(D_{min})+1$       & $r(D_{max})+1$ &$r(D_{max})+1$      &$w(D)$ &  $w(D)+1$           \\
  $tp$          &$r(D_{min})$         & $r(D_{max})+1$ &$r(D_{max})+1$      &$w(D)+1$  & $w(D)+1$          \\
  $tu$          &$r(D_{min})$         & $r(D_{max})-1$ &$r(D_{max})$     &$w(D)-1$  &  $w(D)$          \\
  $pt$          &$r(D_{min})-1$       & $r(D_{max})$   &$r(D_{max})-1$      &$w(D)+1$ &  $w(D)-1$           \\
  $ut$          &$r(D_{min})+1$       & $r(D_{max})$   &$r(D_{max})-1$      &$w(D)-1$ &  $w(D)-1$         \\
  $tt$          &$r(D_{min})$         & $r(D_{max})$   &$r(D_{max})-1$      &$w(D)$  &  $w(D)-1$         \\
  \hline
\end{tabular}
\end{table}
Hence, the columns 2 and 3 of Table \ref{ta 1} are computed.
If the type of $e$ is $pp$ in $D$, then $$w(D^{*|e})=w(D)+2$$ and $$w(D^{\times|e})=w(D)+1,$$ and the types of $e$ are $uu$ and $pt$ in $D^{*|e}$ and $D^{\times|e}$, respectively.
Thus $$w(D^{\ast\times|e})=w((D^{*|e})^{\times|e})=w(D^{*|e})=w(D)+2,$$ and $$w(D^{\times\ast|e})=w((D^{\times|e})^{*|e})=w(D^{\times|e})+1=w(D)+2,$$
and the type of $e$ is $tu$ in $D^{\ast\times|e}$.
We have $$w(D^{\ast\times\ast|e})=w((D^{\ast\times|e})^{\ast|e})=w(D^{\ast\times|e})-1=w(D)+1.$$
The other entries in columns 4, 5 and 6 of Table \ref{ta 1} are computed similarly.
\end{proof}

The polynomial $p(z)=\sum\limits_{i=0}^{n}c_{i}z^{i}$ is said to have a \emph{gap of size $k$} \cite{GMT2} at coefficient $c_{i}$ if $c_{i-1}c_{i+k}\neq 0$ but $c_{i}=c_{i+1}=\cdots=c_{i+k-1}=0$. If the polynomial $p(z)$ is nonzero
and has no gaps, we call it \emph{interpolating}.
\begin{proposition}
For any vf-safe delta-matroid $D$, the following statements hold:
\begin{description}
  \item[(1)] $^{\partial}w_{D}^{\bullet}(z)$ is interpolating for $\bullet=\times$ or $\ast\times\ast;$
  \item[(2)] $^{\partial}w_{D}^{\bullet}(z)$ has no gaps of size 2 or more for any $\bullet\in \mathcal{B}.$
\end{description}

\begin{proof}
For any element $e$ and subset $A$ of $E$, we observe that $w(D^{\bullet|A\Delta e})$ and $w(D^{\bullet|A})$ differ by at most one for $\bullet\in \{\times, \ast\times\ast\}$, and by at most two for $\bullet\in \{\ast, \ast\times, \times\ast\}$ by Theorem \ref{lem 5}. This yields statements (1) and (2).
\end{proof}
\end{proposition}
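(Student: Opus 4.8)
The plan is to deduce everything from the single-element difference table, Table~\ref{ta 1}, established in Theorem~\ref{lem 5}. The key observation is that for any subset $A\subseteq E$ and any element $e\in E$, the two delta-matroids $D^{\bullet|A}$ and $D^{\bullet|(A\Delta e)}$ are related by a single partial-$\bullet$ operation on the element $e$: indeed $D^{\bullet|(A\Delta e)} = (D^{\bullet|A})^{\bullet|e}$ when $e\notin A$, and $D^{\bullet|(A\Delta e)} = (D^{\bullet|(A-e)})^{\bullet|e}$ with $D^{\bullet|A}=(D^{\bullet|(A-e)})^{\bullet|e}$ as well, so in either direction the two set systems differ by applying the operator $\bullet|e$ once to a common delta-matroid (and, since $D$ is vf-safe, all the intermediate objects are delta-matroids, so Theorem~\ref{lem 5} genuinely applies). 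Reading off the column of Table~\ref{ta 1} corresponding to $\bullet$, the entry $w(D'^{\bullet|e})-w(D')$ lies in $\{-1,0,+1\}$ for $\bullet\in\{\times,\ast\times\ast\}$ and in $\{-2,-1,0,+1,+2\}$ for $\bullet\in\{\ast,\ast\times,\times\ast\}$. Hence $|w(D^{\bullet|(A\Delta e)})-w(D^{\bullet|A})|\le 1$ in the first case and $\le 2$ in the second case.

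Now I would turn this Lipschitz-type estimate into a no-gap statement for the polynomial $^{\partial}w_D^{\bullet}(z)=\sum_{A\subseteq E}z^{w(D^{\bullet|A})}$. Writing $^{\partial}w_D^{\bullet}(z)=\sum_i c_i z^i$, the coefficient $c_i$ counts the subsets $A$ with $w(D^{\bullet|A})=i$. Suppose $c_a\neq 0$ and $c_b\neq 0$ with $a<b$; pick $A$ with $w(D^{\bullet|A})=a$ and $B$ with $w(D^{\bullet|B})=b$. Passing from $A$ to $B$ by toggling one element of $A\Delta B$ at a time gives a sequence $A=A_0,A_1,\dots,A_m=B$ in which consecutive sets are symmetric-difference neighbours, so the integer sequence $w(D^{\bullet|A_j})$ changes by at most $1$ (resp.\ at most $2$) at each step. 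Consequently every integer value between $a$ and $b$ (resp.\ every integer not skipped by a jump of size $2$) is attained by some $w(D^{\bullet|A_j})$; in particular, for $\bullet\in\{\times,\ast\times\ast\}$ all integers in $[a,b]$ occur, so $c_i\neq 0$ for all $a\le i\le b$, giving statement~(1) (the polynomial is nonzero since $c_0$ or some $c_i$ is positive, and it has no gaps). For general $\bullet$, a step of size $2$ can skip at most one integer, so the value set of $w(D^{\bullet|\cdot})$ cannot contain a gap of two or more consecutive missing integers, which is exactly statement~(2).

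The only genuine subtlety — and the step I would be most careful about — is making sure Theorem~\ref{lem 5} is legitimately invoked at each toggle: it is stated for vf-safe delta-matroids, and one must check that the intermediate set systems $D^{\bullet|A_j}$ are all vf-safe delta-matroids. This is immediate because vf-safeness is, by definition, preserved under arbitrary sequences of twists and loop complementations, and each $\bullet\in\mathcal{B}$ is such a sequence; so $D^{\bullet|A_j}$ is vf-safe for every $j$, and the single-element step from $A_j$ to $A_{j+1}$ is covered by the appropriate row and column of Table~\ref{ta 1}. Everything else is the elementary combinatorial fact that an integer-valued function on the Boolean lattice whose values change by at most $k$ along edges has a value set with no gap larger than $k-1$ between attained values; for $k=1$ this forces the value set to be an interval, which is precisely the interpolating property. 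No heavy machinery beyond Theorem~\ref{lem 5} is needed.
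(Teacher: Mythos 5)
Your proposal is correct and follows essentially the same route as the paper: bound the change $|w(D^{\bullet|A\Delta e})-w(D^{\bullet|A})|$ by $1$ (for $\times$, $\ast\times\ast$) or $2$ (otherwise) via Theorem~\ref{lem 5} and Table~\ref{ta 1}, then conclude interpolation and the no-gap statement. The only difference is that you make explicit the toggle-path argument on the Boolean lattice and the vf-safety of the intermediate partial duals, both of which the paper leaves implicit.
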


\begin{remark}
There exists a vf-safe delta-matroid $D$ such that $^{\partial}w_{D}^{\bullet}(z)$ is not interpolating for $\bullet\in\{\ast, \ast\times, \times\ast\}$. For example, let $$D_{1}=(\{1, 2\}, \{\emptyset, \{1, 2\}\})$$ and $$D_{2}=(\{1, 2\}, \{\emptyset, \{1\}, \{1, 2\}\}).$$ We have $$^{\partial}w_{D_{1}}^{\ast}(z)=2+2z^{2}$$ and $$^{\partial}w_{D_{2}}^{\ast\times}(z)={^{\partial}w_{D_{2}}^{\times\ast}(z)}=1+3z^{2}.$$
\end{remark}



\section{ Partial-twuality polynomials and intersection graphs}

In \cite{QYXJ2}, we showed that two bouquets with the same intersection graph have the same partial-$\delta$ polynomial. In this section, we prove that the intersection graphs can determine the partial-twuality polynomials of bouquets and normal binary delta-matroids, respectively. Let $\eta: \mathcal{R}\rightarrow \mathcal{B}$ be the group isomorphism induced by $\eta(\delta)=\ast$, and $\eta(\tau)=\times$.

\begin{lemma}[\cite{CMNR}\label{lem 1}]
If $G$ is a ribbon graph, $A\subseteq E$ and $\bullet\in \mathcal{R}$. Then $$D(G^{\bullet|A})=D(G)^{\eta(\bullet)|A}$$
and $$\varepsilon(G)=w(D(G)).$$
\end{lemma}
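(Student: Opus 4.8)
The plan is to establish the operator identity $D(G^{\bullet\mid A})=D(G)^{\eta(\bullet)\mid A}$ first, and then read off the Euler-genus identity from it. For the operator identity I would reduce to base cases. Every $\bullet\in\mathcal R$ is represented by a word $a_1a_2\cdots a_n$ in $\{\delta,\tau\}$, and by definition $H^{\bullet\mid e}=(\cdots(H^{a_1\mid e})^{a_2\mid e}\cdots)^{a_n\mid e}$ for a ribbon graph $H$; correspondingly $D^{\eta(\bullet)\mid e}=(\cdots(D^{\eta(a_1)\mid e})^{\eta(a_2)\mid e}\cdots)^{\eta(a_n)\mid e}$, since $\eta$ is a group isomorphism. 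So if I had the two base cases $D(H^{\delta\mid e})=D(H)^{\ast\mid e}$ and $D(H^{\tau\mid e})=D(H)^{\times\mid e}$ for every ribbon graph $H$ and every $e\in E(H)$, then induction on $n$ would give $D(H^{\bullet\mid e})=D(H)^{\eta(\bullet)\mid e}$ for all $\bullet\in\mathcal R$, and a second induction on $|A|$ (peeling off one element of $A$ at a time, which is legitimate because partial duals and partial Petrials commute on distinct edges, as do twists and loop complementations) would give the full statement. Everything here is formal once the two base cases are in hand, and the well-definedness of both group actions on the $S_3$-quotient guarantees the conclusion is independent of the chosen word.

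\emph{The base cases.} For $\delta$, I would show directly that $F\subseteq E(G)$ is the edge set of a spanning quasi-tree of $G^{\delta\mid e}$ if and only if $F\,\Delta\,e$ is the edge set of a spanning quasi-tree of $G$; as $\{F\,\Delta\,e:F\in\mathcal F(G)\}$ is by definition $\mathcal F(G)^{\ast\mid e}$, this is exactly the claim. This equivalence is local: using the description of partial duality at a single edge (see \cite{CG,EM,CMNR}), one compares the boundary components of $(V(G^{\delta\mid e}),F)$ with those of $(V(G),F\,\Delta\,e)$ and, in particular, checks that one has a single boundary component precisely when the other does. For $\tau$, a half-twist on $e$ leaves every spanning ribbon subgraph avoiding $e$ unchanged, so feasible sets not containing $e$ are untouched; for a subgraph containing $e$, a half-twist on $e$ changes the number of boundary components in exactly the way that toggles feasibility according to the loop-complementation rule $\mathcal F\mapsto\mathcal F\,\Delta\,\{F\cup e:F\in\mathcal F,\ e\notin F\}$. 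I expect this $\tau$ base case to be the main obstacle: one has to track how adding a half-twisted band changes the boundary-component count of a spanning subgraph, splitting into the cases in which $e$ is a bridge, a non-loop lying on a cycle, an orientable loop, or a non-orientable loop (this is the content of the identification of loop complementation with partial Petriality in \cite{CISR,CMNR}).

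\emph{Deducing the Euler-genus identity.} Take $G$ connected (the general case follows by additivity of $\varepsilon$, $w$ and $D(\cdot)$ over connected components). Applying the operator identity with $\bullet=\delta$ and $A=E(G)$ gives $D(G)^{\ast}=D(G)^{\ast\mid E}=D\big(G^{\delta\mid E}\big)$. The minimum-size feasible sets of any ribbon-graphic delta-matroid are the spanning trees: a spanning tree is a spanning quasi-tree with $v(G)-1$ edges (where $v(G)$ is the number of vertices of $G$), while any spanning quasi-tree is spanning and connected, hence has at least $v(G)-1$ edges. Thus $r\big(D(G)_{\min}\big)=v(G)-1$ and, likewise, $r\big(D(G^{\delta\mid E})_{\min}\big)=v\big(G^{\delta\mid E}\big)-1=f(G)-1$, where $f(G)$ is the number of boundary components of $G$ (equivalently, the number of vertex-discs of the partial dual with respect to $E(G)$). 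Since $r\big((D(G)^{\ast})_{\min}\big)=|E(G)|-r\big(D(G)_{\max}\big)$, combining this with $D(G)^{\ast}=D(G^{\delta\mid E})$ yields $r\big(D(G)_{\max}\big)=|E(G)|-f(G)+1$, and therefore
\[
w(D(G))=r\big(D(G)_{\max}\big)-r\big(D(G)_{\min}\big)=|E(G)|-v(G)-f(G)+2=\varepsilon(G),
\]
where the last step is Euler's formula $v(G)-|E(G)|+f(G)=2-\varepsilon(G)$ for a connected cellularly embedded graph.
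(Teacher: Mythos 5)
The paper offers no proof to compare against: this lemma is stated as a quoted result from \cite{CMNR} (with the identification of partial Petriality with loop complementation going back to \cite{CISR}), so your attempt can only be judged on its own. Your overall architecture is sound and is indeed how the cited sources organize the material: reduce to the single-edge identities $D(H^{\delta\mid e})=D(H)^{\ast\mid e}$ and $D(H^{\tau\mid e})=D(H)^{\times\mid e}$, extend to words in $\{\delta,\tau\}$ and then to subsets $A$ using commutativity on distinct edges and the $S_3$ relations, and finally extract the genus identity. Your derivation of $\varepsilon(G)=w(D(G))$ is complete and correct: for connected $G$, spanning trees are spanning quasi-trees and every spanning quasi-tree is connected, so $r(D(G)_{\min})=v(G)-1$; the identity $D(G)^{\ast}=D(G^{\delta\mid E})$ applied to the dual gives $r(D(G)_{\max})=|E(G)|-f(G)+1$; Euler's formula finishes it. (For disconnected $G$ you implicitly use the convention that $D$ of a disjoint union is the direct sum, which is the standard one.)

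The genuine gap is that the two base cases, which carry essentially all the content of the lemma, are only gestured at. For $\delta$ you assert the quasi-tree correspondence and point back to the literature; for $\tau$ you flag it yourself as the main obstacle and then defer it to \cite{CISR,CMNR} --- that is, to the very results being cited --- so nothing new is actually proved there. Moreover, the case split you propose for the $\tau$ base case (bridge, non-loop on a cycle, orientable loop, non-orientable loop) is keyed to the wrong invariant: what must be analyzed is how the two ends of the half-twisted edge $e$ attach to the boundary components of the spanning ribbon subgraph $(V(G),F-e)$ --- same or different components, and with which orientation --- and one must verify that $F\ni e$ is the edge set of a spanning quasi-tree of $G^{\tau\mid e}$ precisely when exactly one of $F$ and $F-e$ is feasible in $D(G)$, which is what the rule $\mathcal F\mapsto\mathcal F\,\Delta\,\{F\cup e:F\in\mathcal F,\ e\notin F\}$ demands. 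So as a self-contained proof the proposal is incomplete; as a reduction of the lemma to the known single-edge facts, together with a clean and correct derivation of $\varepsilon(G)=w(D(G))$ from them, it is fine.
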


\begin{proposition}
Let $G=(V, E)$ be a ribbon graph and $\bullet\in \mathcal{R}$. Then $$^{\partial}w_{D(G)}^{\eta(\bullet)}(z)={^{\partial}\varepsilon_{G}^{\bullet}(z)}.$$
\end{proposition}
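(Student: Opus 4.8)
The plan is to unwind both generating functions by their defining sums and match them term by term using Lemma~\ref{lem 1}. Concretely, fix a ribbon graph $G=(V,E)$ and an operator $\bullet\in\mathcal{R}$. By definition,
$$^{\partial}\varepsilon_{G}^{\bullet}(z)=\sum_{A\subseteq E}z^{\varepsilon(G^{\bullet|A})},\qquad {^{\partial}w_{D(G)}^{\eta(\bullet)}(z)}=\sum_{A\subseteq E}z^{w(D(G)^{\eta(\bullet)|A})}.$$
Both sums range over the same index set, namely the subsets $A$ of $E=E(G)$, since $D(G)$ has the same ground set as $G$. So it suffices to show that the exponents agree for each fixed $A\subseteq E$, i.e.\ that $\varepsilon(G^{\bullet|A})=w(D(G)^{\eta(\bullet)|A})$.

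The key step is a single application of Lemma~\ref{lem 1}. That lemma gives two facts: first, $D(G^{\bullet|A})=D(G)^{\eta(\bullet)|A}$, which identifies the delta-matroid of the partial-$\bullet$ dual of $G$ with the partial-$\eta(\bullet)$ dual of the delta-matroid of $G$; and second, $\varepsilon(H)=w(D(H))$ for any ribbon graph $H$, relating Euler genus to delta-matroid width. Applying the second fact to the ribbon graph $H=G^{\bullet|A}$ yields $\varepsilon(G^{\bullet|A})=w(D(G^{\bullet|A}))$, and then substituting the first fact gives $w(D(G^{\bullet|A}))=w(D(G)^{\eta(\bullet)|A})$. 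Chaining these,
$$\varepsilon(G^{\bullet|A})=w\bigl(D(G^{\bullet|A})\bigr)=w\bigl(D(G)^{\eta(\bullet)|A}\bigr),$$
which is exactly the term-by-term equality we need.

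Summing over all $A\subseteq E$ then gives $^{\partial}\varepsilon_{G}^{\bullet}(z)={^{\partial}w_{D(G)}^{\eta(\bullet)}(z)}$, completing the argument. There is essentially no obstacle here: the proposition is a direct corollary of Lemma~\ref{lem 1} once one observes that the two generating functions are indexed over the same family of subsets and that $\eta$ is precisely the isomorphism intertwining the ribbon-graph operators with the set-system operators (as set up just before Lemma~\ref{lem 1}). The only point requiring a word of care is that $\eta$ being a group isomorphism guarantees $\eta(\bullet)\in\mathcal{B}$ is well defined for every $\bullet\in\mathcal{R}$, so the right-hand side makes sense; this is immediate from the definition $\eta(\delta)=\ast$, $\eta(\tau)=\times$. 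One might also remark that this proposition is the natural bridge making the subsequent results about intersection graphs of bouquets transfer between the ribbon-graph and delta-matroid settings, but no further input is needed for the proof itself.
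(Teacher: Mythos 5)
Your proof is correct and follows the same route as the paper: both apply Lemma~\ref{lem 1} to get $\varepsilon(G^{\bullet|A})=w(D(G^{\bullet|A}))=w(D(G)^{\eta(\bullet)|A})$ for each $A\subseteq E$ and then sum over all subsets. Your write-up is simply a more detailed spelling-out of the paper's one-line argument.
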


\begin{proof}
By Lemma \ref{lem 1}, for any $A\subseteq E$, $$w(D(G)^{\eta(\bullet)| A})=w(D(G^{\bullet|A}))=\varepsilon(G^{\bullet|A}).$$
Hence $^{\partial}w_{D(G)}^{\eta(\bullet)}(z)={^{\partial}\varepsilon_{G}^{\bullet}(z)}$.
\end{proof}



\begin{theorem}\label{main-2}
If two normal binary delta-matroids $D$ and $\widetilde{D}$ have the same intersection graph, then $^{\partial}w_{D}^{\bullet}(z)={^{\partial}w_{\widetilde{D}}^{\bullet}(z)}$ for any  $\bullet\in \mathcal{B}$.
\end{theorem}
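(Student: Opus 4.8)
The plan is to reduce the statement to a purely combinatorial fact about the matrix $C$ over $GF(2)$ that represents a normal binary delta-matroid. Recall from Section 2.5 that a normal binary delta-matroid $D$ has a \emph{unique} symmetric matrix $C$ over $GF(2)$ with $D = D(C)$, and that $D = D(A(G_D))$, where $A(G_D)$ is the adjacency matrix of the intersection graph $G_D$ (a looped simple graph). Hence, saying that $D$ and $\widetilde D$ have the same intersection graph is exactly saying that they are represented by the \emph{same} matrix $C$; so in fact $D = \widetilde D$ under this hypothesis. Wait — that cannot be the intended content, so the subtlety must be that ``same intersection graph'' means isomorphic intersection graphs, i.e. there is a relabelling of the ground set carrying one matrix to the other. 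So the first step I would take is to observe that an isomorphism of intersection graphs is precisely a bijection $\phi\colon E \to \widetilde E$ with $A(G_D)_{u,v} = A(G_{\widetilde D})_{\phi(u),\phi(v)}$ for all $u,v$, equivalently $\widetilde D$ is isomorphic to $D$ as a set system via $\phi$.

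The second step is to check that the partial-$\bullet$ polynomial is an isomorphism invariant of set systems. This is essentially immediate: if $\phi\colon E\to\widetilde E$ is a set-system isomorphism $D\to\widetilde D$, then for any $A\subseteq E$ one has $\widetilde D^{\,\bullet|\phi(A)} \cong D^{\bullet|A}$ (the operations $\ast$ and $\times$ are defined elementwise and entirely in terms of the incidence structure, so they commute with relabelling), and isomorphic set systems have equal width since $w$ depends only on the cardinalities of feasible sets. As $A\mapsto \phi(A)$ is a bijection on subsets, the two generating functions $\sum_{A\subseteq E} z^{w(D^{\bullet|A})}$ and $\sum_{B\subseteq\widetilde E} z^{w(\widetilde D^{\,\bullet|B})}$ agree term by term. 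Combining with the first step gives the theorem.

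The only place where any real work hides is the claim in step one that ``same intersection graph'' for normal binary delta-matroids forces isomorphism as set systems; this rests on the uniqueness of the representing matrix $C$ (cited as \cite{Moff, QYXJ3}) together with the identity $D = D(A(G_D))$, which identifies $C$ with $A(G_D)$. Granting that, the result is a formal consequence. I would therefore structure the write-up as: (i) recall $D = D(A(G_D))$ and $\widetilde D = D(A(G_{\widetilde D}))$; (ii) an isomorphism $G_D \cong G_{\widetilde D}$ of looped simple graphs yields a permutation matrix $P$ with $A(G_{\widetilde D}) = P^{\mathsf T} A(G_D) P$, hence a set-system isomorphism $D \to \widetilde D$; (iii) invoke the invariance of $^{\partial}w^{\bullet}$ under set-system isomorphism. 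I do not expect a genuine obstacle here — the content is really the normalization machinery of Section 2.5, and the present theorem is its clean corollary; the companion statement for bouquets (asserted in the same section) then follows because $I(B) = G_{D(B)}$ and $D(B)$ is normal binary, so two bouquets with isomorphic intersection graphs have $D(B)\cong D(\widetilde B)$, and one applies the Proposition relating $^{\partial}\varepsilon^{\bullet}_G$ to $^{\partial}w^{\eta(\bullet)}_{D(G)}$ together with Lemma \ref{lem 1}.
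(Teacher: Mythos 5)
Your proposal is correct, and its core reduction is the same as the paper's: the uniqueness of the representing matrix together with $D=D(A(G_{D}))$ is exactly the mechanism the paper uses. The difference is one of interpretation. The paper reads ``same intersection graph'' literally as $G_{D}=G_{\widetilde{D}}$, and its entire proof is the two-line argument you flagged and then set aside: since $D=D(A(G_{D}))$ and $\widetilde{D}=D(A(G_{\widetilde{D}}))$, equality of the intersection graphs forces $D=\widetilde{D}$, whence the polynomials coincide trivially. (The nontrivial content in that section sits in Theorem \ref{main-1} for bouquets, where distinct bouquets $B\neq\widetilde{B}$ can share the delta-matroid $D(B)=D(\widetilde{B})$, and one needs Lemma \ref{lem 1} to transfer the computation --- your closing remark reconstructs that argument correctly.) Your additional step --- that an isomorphism of looped simple graphs gives a permutation matrix conjugating the adjacency matrices, hence a set-system isomorphism, and that $^{\partial}w^{\bullet}$ is invariant under set-system isomorphism because $\ast$ and $\times$ commute with relabelling and width depends only on cardinalities of feasible sets --- is not in the paper but is correct, and it buys a genuinely stronger (and arguably the intended) statement covering delta-matroids on different ground sets with merely isomorphic intersection graphs. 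So there is no gap; you have proved at least what the paper proves, by the same essential identification plus a routine invariance argument.
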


\begin{proof}
Since $G_{D}=G_{\widetilde{D}}$, $D=D(A_{G_{D}})$ and $\widetilde{D}=D(A_{G_{\widetilde{D}}})$, we have $D=\widetilde{D}$. Thus $^{\partial}w_{D}^{\bullet}(z)={^{\partial}w_{\widetilde{D}}^{\bullet}(z)}$ for any  $\bullet\in \mathcal{B}$.
\end{proof}

\begin{theorem}\label{main-1}

Let $B$ and $\widetilde{B}$ be two bouquets. If $G_{D(B)}=G_{D(\widetilde{B})}$, then $^{\partial}\varepsilon_{B}^{\bullet}(z)={^{\partial}}\varepsilon_{\widetilde{B}}^{\bullet}(z)$ for any $\bullet\in \mathcal{R}$.
\end{theorem}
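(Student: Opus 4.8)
The plan is to reduce the statement about bouquets to the already-established Theorem~\ref{main-2} about normal binary delta-matroids, using the dictionary between ribbon-graph operations and delta-matroid operations provided by Lemma~\ref{lem 1}. First I would recall that for a bouquet $B$ the delta-matroid $D(B)$ is a normal binary delta-matroid (this is stated in the Preliminaries), so the hypothesis $G_{D(B)}=G_{D(\widetilde{B})}$ together with Theorem~\ref{main-2} immediately yields $^{\partial}w_{D(B)}^{\eta(\bullet)}(z)={^{\partial}w_{D(\widetilde{B})}^{\eta(\bullet)}(z)}$ for every $\bullet\in\mathcal{R}$, since $\eta(\bullet)\in\mathcal{B}$.

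The second step is to transfer this equality of width polynomials back to an equality of Euler-genus polynomials of the ribbon graphs. This is exactly the content of the Proposition preceding Theorem~\ref{main-1}: for any ribbon graph $G$ and any $\bullet\in\mathcal{R}$ one has $^{\partial}\varepsilon_{G}^{\bullet}(z)={^{\partial}w_{D(G)}^{\eta(\bullet)}(z)}$, which itself follows from Lemma~\ref{lem 1} via $\varepsilon(G^{\bullet|A})=w(D(G^{\bullet|A}))=w(D(G)^{\eta(\bullet)|A})$ applied term by term over all $A\subseteq E$. Applying this Proposition to $B$ and to $\widetilde{B}$ and chaining the three equalities gives
$$^{\partial}\varepsilon_{B}^{\bullet}(z)={^{\partial}w_{D(B)}^{\eta(\bullet)}(z)}={^{\partial}w_{D(\widetilde{B})}^{\eta(\bullet)}(z)}={^{\partial}\varepsilon_{\widetilde{B}}^{\bullet}(z)},$$
which is the desired conclusion. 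One small point to check is that $B$ and $\widetilde{B}$ have the same number of edges, so that the isomorphism $\eta:\mathcal{R}\to\mathcal{B}$ and the comparison of ground sets make sense; this is forced by $G_{D(B)}=G_{D(\widetilde{B})}$ having the same vertex set, whose size is the common edge count.

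I do not anticipate a serious obstacle: the real work has already been done in Theorem~\ref{main-2} (whose proof rests on the uniqueness of the symmetric $GF(2)$ matrix representing a normal binary delta-matroid, forcing $D(B)=D(\widetilde{B})$) and in Lemma~\ref{lem 1}. The only thing to be careful about is that the isomorphism $\eta$ matches the operations correctly — $\eta(\delta)=\ast$ and $\eta(\tau)=\times$ — so that a partial-$\bullet$ dual of a ribbon graph corresponds to a partial-$\eta(\bullet)$ dual of its delta-matroid for \emph{every} element of the group $\mathcal{R}$, not merely the generators; this is precisely why Lemma~\ref{lem 1} is stated for all $\bullet\in\mathcal{R}$ and is what makes the argument go through uniformly for the five twualities rather than case by case.
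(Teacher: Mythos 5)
Your proposal is correct and takes essentially the same route as the paper: both arguments reduce to the fact that the intersection graph determines the normal binary delta-matroid of a bouquet (so $D(B)=D(\widetilde{B})$, which is the content of Theorem~\ref{main-2}) and then transfer widths back to Euler genera term by term via Lemma~\ref{lem 1}. The paper carries out this transfer directly for each $A\subseteq E(B)$ rather than quoting the preceding Proposition and Theorem~\ref{main-2}, but this is only a difference in packaging, not in substance.
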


\begin{proof}
Since $G_{D(B)}=G_{D(\widetilde{B})}$, it follows that $D(B)=D(\widetilde{B})$. For any $A\subseteq E(B)$, we denote its corresponding
subset of $E(\widetilde{B})$ by $\widetilde{A}$, then
$$D(B^{\bullet|A})=D(B)^{\eta(\bullet)|A}=D(\widetilde{B})^{\eta(\bullet)|\widetilde{A}}=D(\widetilde{B}^{\bullet|\widetilde{A}}),$$
by Lemma \ref{lem 1}. We have $$w(D(B^{\bullet|A}))=w(D(\widetilde{B}^{\bullet|\widetilde{A}})).$$
Since $w(D(B^{\bullet|A}))=\varepsilon(B^{\bullet|A})$ and $w(D(\widetilde{B}^{\bullet|\widetilde{A}}))=\varepsilon(\widetilde{B}^{\bullet|\widetilde{A}})$, it follows that $\varepsilon(B^{\bullet|A})=\varepsilon(\widetilde{B}^{\bullet|\widetilde{A}}).$ Thus $^{\partial}\varepsilon_{B}^{\bullet}(z)={^{\partial}\varepsilon_{\widetilde{B}}^{\bullet}(z)}.$
\end{proof}

\section{Partial-twuality monomials}
We \cite{QYXJ3, QYXJ4} showed that a normal binary delta-matroid whose partial-$\ast$ polynomials have only one term if and only if each connected component of the intersection graph of the delta-matroid is either a complete graph of odd order or a single vertex with a loop. In this section, we give a characterization of vf-safe delta-matroids whose partial-$\times$ and $\ast\times\ast$ polynomials have only one term.


\begin{lemma}[\cite{BRHH}]\label{lem 8}
Let $D=(E, \mathcal{F})$ be a set system and $X, Y\subseteq E.$ We have $Y\in \mathcal{F}(D^{\times|X})$ if and only if $|\{Z\in \mathcal{F}(D)~|~Y-X\subseteq Z\subseteq Y\}|$ is odd.
\end{lemma}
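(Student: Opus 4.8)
The plan is to argue via characteristic functions over $GF(2)$. For a set system $D=(E,\mathcal{F})$, let $f_{D}\colon 2^{E}\to GF(2)$ be the indicator function of $\mathcal{F}$, so that $f_{D}(Y)=1$ if and only if $Y\in\mathcal{F}$. The first step is to record, straight from the definition of $D^{\times|e}$, the single-element formula
$$f_{D^{\times|e}}(Y)=\sum_{Y-e\subseteq Z\subseteq Y}f_{D}(Z)\qquad\text{in }GF(2).$$
This is immediate: $Y\in\mathcal{F}(D^{\times|e})$ if and only if exactly one of the statements ``$Y\in\mathcal{F}$'' and ``$Y\in\{F\cup e:F\in\mathcal{F},\ e\notin F\}$'' holds, and the second holds precisely when $e\in Y$ and $Y-e\in\mathcal{F}$ (since $Y-e$ is the only candidate preimage of $Y$ under $F\mapsto F\cup e$). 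The right-hand sum above equals $f_{D}(Y)$ when $e\notin Y$ and $f_{D}(Y)+f_{D}(Y-e)$ when $e\in Y$, which is exactly this parity.

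Next I would introduce, for each $e\in E$, the $GF(2)$-linear operator $\sigma_{e}$ on functions $2^{E}\to GF(2)$ defined by $(\sigma_{e}g)(Y)=\sum_{Y-e\subseteq Z\subseteq Y}g(Z)$, so that the previous display reads $f_{D^{\times|e}}=\sigma_{e}f_{D}$. Since $D^{\times|X}$ is built by applying single-element loop complementations one after another, and these commute \cite{BRHH}, iterating gives $f_{D^{\times|X}}=\bigl(\prod_{e\in X}\sigma_{e}\bigr)f_{D}$ for every $X\subseteq E$. The core of the argument is then the composition identity
$$\Bigl(\prod_{e\in X}\sigma_{e}\Bigr)g\,(Y)=\sum_{Y-X\subseteq Z\subseteq Y}g(Z)\qquad\text{in }GF(2).$$
To prove it, unroll the composition: as the elements of $X$ are distinct, each factor $\sigma_{e}$ independently either leaves its argument unchanged or deletes $e$ when $e$ is present, so $\bigl(\prod_{e\in X}\sigma_{e}\bigr)g\,(Y)=\sum_{S\subseteq X\cap Y}g(Y-S)$; re-indexing by $Z=Y-S$ turns the index set into $\{Z:Y-X\subseteq Z\subseteq Y\}$, establishing the identity. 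Equivalently, one proves it by a short induction on $|X|$, substituting the single-element formula for one $\sigma_{e}$ into the inductive hypothesis for $X-e$ and splitting according to whether $e\in Y$.

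Finally, substituting $g=f_{D}$ into the composition identity gives $f_{D^{\times|X}}(Y)=\sum_{Y-X\subseteq Z\subseteq Y}f_{D}(Z)$ in $GF(2)$; that is, $f_{D^{\times|X}}(Y)$ is the parity of $|\{Z\in\mathcal{F}(D):Y-X\subseteq Z\subseteq Y\}|$. Since $Y\in\mathcal{F}(D^{\times|X})$ exactly when $f_{D^{\times|X}}(Y)=1$, the stated equivalence follows immediately. I expect the only point needing care to be the bookkeeping in the unrolling (equivalently, the inductive step): one must track which sets $Z$ in the Boolean interval from $Y-X$ to $Y$ arise, and check that deletions of distinct elements of $X$ compose without interference. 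Because distinct elements are deleted independently, this is routine rather than genuinely difficult, so I do not anticipate a serious obstacle.
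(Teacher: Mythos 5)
Your proof is correct. Note that the paper itself offers no proof of this lemma---it is quoted from Brijder and Hoogeboom \cite{BRHH}---so there is nothing internal to compare against; your argument via $GF(2)$ indicator functions, the single-element identity $f_{D^{\times|e}}(Y)=\sum_{Y-e\subseteq Z\subseteq Y}f_{D}(Z)$, and the unrolled composition $\bigl(\prod_{e\in X}\sigma_{e}\bigr)g(Y)=\sum_{S\subseteq X\cap Y}g(Y-S)$ is exactly the standard route, and the one delicate point (that $e_{1}\in Y-S_{2}$ iff $e_{1}\in Y$ because the elements of $X$ are distinct, so the deletions do not interfere) is handled correctly in your unrolling step.
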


\begin{theorem}
Let $D=(E, \mathcal{F})$ be a vf-safe delta-matroid. Then
\begin{description}
  \item[(1)] $^{\partial}w_{D}^{\times}(z)=cz^{m}$ if and only if $\mathcal{F}(D)=\{E\};$
  \item[(2)] $^{\partial}w_{D}^{\ast\times\ast}(z)=cz^{m}$ if and only if $\mathcal{F}(D)=\{\emptyset\}$.
\end{description}
\end{theorem}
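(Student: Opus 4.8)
The plan is to prove both statements by the same strategy: show the polynomial is a monomial if and only if the width $w(D^{\times|A})$ (respectively $w(D^{\ast\times\ast|A})$) is independent of $A\subseteq E$, and then pin down which feasible-set collections force this constancy. The ``if'' directions are immediate: if $\mathcal{F}(D)=\{E\}$ then for every $A$, Lemma \ref{lem 8} shows $Y\in\mathcal{F}(D^{\times|A})$ iff exactly one $Z$ with $Y-A\subseteq Z\subseteq Y$ equals $E$, i.e.\ iff $Y=E$; so $D^{\times|A}=D$ for all $A$ and the polynomial is $2^{|E|}z^{0}$. For (2), if $\mathcal{F}(D)=\{\emptyset\}$ then $D^{\ast}=(E,\{E\})$, so by Theorem \ref{th 2} and part (1) applied to $D^{\ast}$ we get $^{\partial}w_{D}^{\ast\times\ast}(z)={^{\partial}w_{D^{\ast}}^{\times}(z)}=2^{|E|}z^{0}$.

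For the ``only if'' direction of (1), suppose $\mathcal{F}(D)\neq\{E\}$; I want to produce some $A$ with $w(D^{\times|A})\neq w(D)$, or more efficiently, exhibit two subsets giving different widths. The key is Lemma \ref{lem 2}: $\mathcal{F}_{min}(D)=\mathcal{F}_{min}(D^{\times|A})$ for all $A$, so $r(D_{min})$ is a fixed invariant and only the maximum side can move; hence $w(D^{\times|A})$ constant is equivalent to $r((D^{\times|A})_{max})$ constant. If $D$ is not dual normal (i.e.\ $E\notin\mathcal{F}(D)$), note $E\in\mathcal{F}(D^{\times|E})$ would require (by Lemma \ref{lem 8}) an odd number of feasible sets $Z$ with $\emptyset\subseteq Z\subseteq E$, i.e.\ $|\mathcal{F}(D)|$ odd; in any case I can analyze $D^{\times|E}=D^{\ast\times\ast\cdot}$-type duals. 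The cleaner route: pick any element $e$ and use Theorem \ref{lem 5}; from the column for $\times$ in Table \ref{ta 1}, $w(D^{\times|e})=w(D)$ forces the type of $e$ to lie in $\{uu,pu,tu\}$, i.e.\ the dual type of $e$ is $u$, meaning every feasible set of size $\geq r(D_{max})-1$ contains $e$. If this holds for every $e\in E$, then in particular every maximum feasible set contains every element, so $r(D_{max})=|E|$ and $E\in\mathcal{F}(D)$; moreover $D$ is a delta-matroid with a unique maximum feasible set $E$. One then iterates: a vf-safe delta-matroid with $\mathcal{F}_{max}=\{E\}$ and $w(D)>0$ still has some feasible set strictly below, and we must rule this out by finding an $A$ (not just a singleton) shifting the width, exploiting that $D^{\times|A}$ is again a vf-safe delta-matroid so Theorem \ref{lem 5} reapplies at each element of it.

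More precisely, the engine is: for a vf-safe delta-matroid $D$, $^{\partial}w_{D}^{\times}(z)$ is a monomial iff $w(D^{\times|A})=w(D)$ for all $A$, iff (by composing single-element steps, since $D^{\times|A}$ is vf-safe and $(D^{\times|A})^{\times|e}=D^{\times|(A\Delta e)}$) for \emph{every} $A$ and \emph{every} $e$, the type of $e$ in $D^{\times|A}$ has dual-component $u$. Applying this with $A$ ranging over all subsets and chasing Lemma \ref{lem 2} (which gives $\mathcal{F}_{max}(D^{\ast\times\ast|A})=\mathcal{F}_{max}(D)$ hence controls $D^{\times|A}=((D^{\ast\times\ast|A})^{\ast})^{\ast\cdots}$ via duality) forces $\mathcal{F}(D)$ to have all feasible sets of the top two sizes equal, and inductively collapses $\mathcal{F}(D)$ to a single set; normality of delta-matroids plus Lemma \ref{lem 9} (every feasible set sits between a minimum and a maximum one) then identifies that set as $E$. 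Part (2) follows from part (1) by the duality Theorem \ref{th 2}, since $\mathcal{F}(D^{\ast})=\{E\}$ iff $\mathcal{F}(D)=\{\emptyset\}$.

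The main obstacle I anticipate is the ``only if'' direction of (1): translating ``$w(D^{\times|A})=w(D)$ for all $A$'' into a structural statement strong enough to force $\mathcal{F}(D)=\{E\}$. The single-element table (Theorem \ref{lem 5}) only says that at each step the dual type is $u$, which a priori permits a feasible set well below $E$ as long as it stays below $r(D_{max})-2$; closing this gap requires either an inductive argument on $w(D)$ (deleting or contracting $e$ to reduce to smaller delta-matroids while tracking the width polynomial) or a direct combinatorial argument that a vf-safe delta-matroid all of whose loop-complementation twists preserve width must be trivial. I would lean on the exchange axiom together with Lemma \ref{lem 9} to show that if $\mathcal{F}(D)\supsetneq\{E\}$ there is a feasible set $X$ with $|X|=|E|-k$ for some $k\geq 1$ maximal, pick $A=E\setminus X$, and compute via Lemma \ref{lem 8} that $X\Delta A=E$ becomes infeasible in $D^{\times|A}$ while some larger-than-before set becomes feasible, producing a genuine width change — making this computation clean is the crux.
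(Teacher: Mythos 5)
Your ``if'' directions are fine (for (1) your Lemma \ref{lem 8} computation showing $D^{\times|A}=D$ when $\mathcal{F}(D)=\{E\}$ is essentially the paper's Lemma \ref{lem 2} argument), and your reduction of (2) to (1) via Theorem \ref{th 2} is exactly the paper's. Your engine for the ``only if'' direction of (1) is also the right one: monomiality means $w(D^{\times|A})$ is constant in $A$, and since $(D^{\times|A})^{\times|e}=D^{\times|(A\Delta e)}$ (equivalently, Proposition \ref{pro 1}), Theorem \ref{lem 5} forces the dual type of every element to be $u$ in every $D^{\times|A}$; applied at $A=\emptyset$ this gives $\mathcal{F}_{max}(D)=\{E\}$ and $\mathcal{F}_{max-1}(D)=\emptyset$, just as in the paper.

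However, there is a genuine gap at exactly the point you flag as ``the crux'': excluding feasible sets of size at most $r(D_{max})-2$. Your sketched route --- take a largest proper feasible set $X$, put $A=E\setminus X$, and argue via Lemma \ref{lem 8} that $E$ becomes infeasible in $D^{\times|A}$ while something larger becomes feasible --- does not go through as stated, because Lemma \ref{lem 8} is a parity statement: the number of feasible $Z$ with $X\subseteq Z\subseteq E$ includes at least $X$ and $E$ and possibly others, so it can be even or odd, and no conclusion about $E$'s feasibility in $D^{\times|A}$ (or about a width increase) follows without further control. The paper's resolution is a single-element move that makes the parity counts trivially equal to one: if $B\in\mathcal{F}_{max-2}(D)$ and $f\in E-B$, then since $\mathcal{F}_{max-1}(D)=\emptyset$ the only feasible set between $B$ and $B\cup f$ is $B$, and the only one between $E-f$ and $E$ is $E$, so Lemma \ref{lem 8} gives $B\cup f,\,E\in\mathcal{F}(D^{\times|f})$; hence $B\cup f\in\mathcal{F}_{max-1}(D^{\times|f})$ misses some $g$, the dual type of $g$ in $D^{\times|f}$ is not $u$, Theorem \ref{lem 5} yields $w((D^{\times|f})^{\times|g})\neq w(D^{\times|f})$, and Proposition \ref{pro 1} converts this into a contradiction with $^{\partial}w_{D}^{\times}(z)=cz^{m}$. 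One then concludes $\mathcal{F}(D)=\{E\}$ using the fact that the sizes of feasible sets of a delta-matroid have gaps of at most two. Your ``inductive collapse'' is thus the right intuition, but the decisive computation is missing, and the specific subset $A=E\setminus X$ you propose does not provide the parity control that the single-element choice does.
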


\begin{proof}
{\bf (1)} Suppose that  $^{\partial}w_{D}^{\times}(z)=cz^{m}$. Then for any $e\in E$, the dual type of $e$ is $u$ in $D$, otherwise applying $\times|e$ changes the width according to Theorem \ref{lem 5}. Then for any $A\in \mathcal{F}_{max}(D)\cup \mathcal{F}_{max-1}(D)$, we have $e\in A$. Thus $\mathcal{F}_{max}(D)=\{E\}$ and $\mathcal{F}_{max-1}(D)=\emptyset$. Suppose $\mathcal{F}_{max-2}(D)\neq\emptyset$. Let $B\in \mathcal{F}_{max-2}(D)$ and $f\in E-B$. Then $B\cup f, E\in\mathcal{F}(D^{\times|f})$ by Lemma \ref{lem 8}. Observe that $B\cup f\in\mathcal{F}_{max-1}(D^{\times|f})$ and $E\in \mathcal{F}_{max}(D^{\times|f})$.
Let $g\in E-(B\cup f)$. Then there exists $B\cup f\in \mathcal{F}_{max-1}(D^{\times|f})\cup \mathcal{F}_{max}(D^{\times|f})$ such that $g\notin B\cup f$. Thus the dual type of $g$ is not $u$ in $D^{\times|f}$. We have $w(D^{\times|f})\neq w((D^{\times|f})^{\times|g})$ by Theorem \ref{lem 5}. Then $^{\partial}w_{D^{\times|f}}^{\times}(z)\neq cz^{m}$. Note that $^{\partial}w_{D^{\times|f}}^{\times}(z)={^{\partial}w_{D}^{\times}(z)}$ by Proposition \ref{pro 1}. It follows that $^{\partial}w_{D}^{\times}(z)\neq cz^{m}$, a contradiction. Then $\mathcal{F}_{max-2}(D)=\emptyset$.   Since the maximum gap in the collection of sizes of feasible sets of a delta-matroid is two, it follows that $\mathcal{F}(D)=\{E\}$.

Conversely, for any $X\subseteq E$, $$\mathcal{F}_{min}(D^{\times|X})=\mathcal{F}_{min}(D)=\{E\}$$ by Lemma \ref{lem 2}. Then $\mathcal{F}(D^{\times|X})=\{E\}$. Thus $w(D^{\times|X})=0$ and $^{\partial}w_{D}^{\times}(z)=2^{|E|}.$

{\bf (2)} For $\ast\times\ast$, by Theorem \ref{th 2}, $^{\partial}w_{D}^{\ast\times\ast}(z)={^{\partial}w_{D^{*}}^{\times}(z)=cz^{m}}$ if and only if
$\mathcal{F}(D^{\ast})=\{E\}$ if and only if $\mathcal{F}(D)=\{\emptyset\}$.
\end{proof}

\section*{Acknowledgements}
This work is supported by NSFC (Nos. 12171402, 12101600) and the Fundamental Research Funds for the Central Universities (No. 2021QN1037).


\end{document}